\documentclass{amsart}
\usepackage{amssymb,latexsym,amsmath,amscd,graphicx,setspace,amsthm,verbatim,comment}
\usepackage[margin = 3 cm]{geometry} \input xy \xyoption{all}

\theoremstyle{plain}
\newtheorem{theorem}{Theorem}[section]
\newtheorem{proposition}[theorem]{Proposition}

\newtheorem{corollary}[theorem]{Corollary}
\newtheorem{lemma}[theorem]{Lemma}

\makeatletter
\def\th@remark{%
  \thm@headfont{\bfseries}%
  \normalfont 
  \thm@preskip\topsep \divide\thm@preskip\tw@
  \thm@postskip\thm@preskip
}
\makeatother

\theoremstyle{remark} 
\newtheorem{remark}[theorem]{Remark}
      
\theoremstyle{definition}
\newtheorem{definition}[theorem]{Definition}

\begin{document} 
\title[The special value $u=1$ of Artin-Ihara $L$-functions]
{The special value $u=1$ of Artin-Ihara $L$-functions}

\author{Kyle Hammer, Thomas W. Mattman, Jonathan W. Sands, Daniel Valli\`{e}res}

\address{Mathematics and Statistics Department, California State University, Chico, CA 95929 USA}
\email{khammer4@mail.csuchico.edu}
\address{Mathematics and Statistics Department, California State University, Chico, CA 95929 USA}
\email{tmattman@csuchico.edu}
\address{Department of Mathematics, University of Vermont, Burlington, VT 05401, USA} 
\email{Jonathan.Sands@uvm.edu} 
\address{Mathematics and Statistics Department, California State University, Chico, CA 95929 USA}
\email{dvallieres@csuchico.edu}

\subjclass[2010]{Primary: 05C25} 
\date{\today}

\begin{abstract}
We study the special value $u=1$ of Artin-Ihara $L$-functions associated to characters of the automorphism group of abelian covers of multigraphs.  In particular, we show an annihilation statement analogous to a classical conjecture of Brumer on annihilation of class groups for abelian extensions of number fields and we also calculate the index of an ideal analogous to the classical Stickelberger ideal in algebraic number theory.  Along the way, we make some observations about the number of spanning trees in abelian multigraph coverings that may be of independent interest.
\end{abstract} 
\maketitle 
\tableofcontents 

\section{Introduction}
The study of special values of $L$-functions in arithmetic geometry is a rich area of research that contains many conjectures and comparatively few unconditional results.  The equivariant Tamagawa number conjecture, as formulated by Burns and Flach in \cite{Burns/Flach:2001}, is an example of such a very general conjecture, which contains as special cases the more concrete Birch and Swinnerton-Dyer conjecture for elliptic curves (see \cite{Birch/Swinnerton-Dyer:1965}) and Stark's conjecture for Galois extensions of number fields (see \cite{Stark:1971}, \cite{Stark:1975}, \cite{Stark:1976}, and \cite{Stark:1980}).  Despite recent investigations by several authors, the latter two conjectures are still open in general.

Various problems and concepts from the theory of algebraic curves or from algebraic number theory have been transferred over to graph theory.  For instance, one can attach a zeta function to a multigraph, called the Ihara zeta function, and one can try to find analogues of the prime number theorem, the Riemann hypothesis and Siegel zeros for instance.  An analogue of Dirichlet's class number formula for Dedekind zeta functions has been found for the special value $u=1$ of Ihara zeta functions.  The invariant playing the role of the class number is the number of spanning trees of the multigraph which turns out to be the cardinality of a finite abelian group called the Jacobian of a multigraph.  Several of these questions have been studied in \cite{Terras:2011} for instance.

Now, starting with a Galois cover of multigraphs, the theory becomes an equivariant one, and one can define some $L$-functions attached to finite dimensional complex linear representations of the automorphism group of the cover.  These $L$-functions are analogous to the classical Artin $L$-functions, and we shall refer to them as Artin-Ihara $L$-functions.  As in algebraic number theory, the Ihara zeta function of the covering multigraph can be written as a product of finitely many Artin-Ihara $L$-functions, and it becomes natural to ask if some of the equivariant conjectures on special values of $L$-functions have analogues in the context of multigraphs.  In this paper, we shall show that there is an analogue to the classical Brumer conjecture on annihilation of class groups (see \cite{Rideout:1970}) for abelian covers of multigraphs and we shall also calculate the index of an ideal analogous to the classical Stickelberger ideal in algebraic number theory (see \cite{Sinnott:1981}).  Along the way, we make some observations about the number of spanning trees in abelian multigraph coverings that may be of independent interest. Specifically, in the case of an abelian cover of multigraphs, the number of trees in the top multigraph is divisible by that in the lower (see Corollary \ref{divisibility}). In Section \ref{relations} we see that, in the case of a 
$(\mathbb{Z}/2 \mathbb{Z})^m$ cover, the spanning tree number for the top multigraph is determined by those for the base and its intermediate double covers.

\section{Multigraphs}

We start by recalling what we mean by multigraphs.  
\begin{definition} \label{first_def}
\hfill
\begin{enumerate}
\item A multigraph $X$ consists of a set $V_{X}$ of vertices, and a multiset $E_{X}$ of unordered pairs of vertices whose elements are called edges.  If $e$ is an edge, then we let $V_{X}(e)$ denote the set consisting of the corresponding pair of vertices.  An edge $e$ is called a loop if $V_{X}(e)$ is a singleton.  An edge $e$ is said to be incident to a vertex $v$ if $v \in V_{X}(e)$.  If $v_{1}, v_{2} \in V_{X}$, then we write $v_{1} \sim v_{2}$ if there exists $e \in E_{X}$ such that $V_{X}(e) = \{v_{1},v_{2}\}$.
\item Let $X$ and $Y$ be multigraphs.  A morphism of multigraphs consists of two functions, which we denote by the same symbol, $f:V_{X} \longrightarrow V_{Y}$ and $f:E_{X} \longrightarrow E_{Y}$ satisfying $f(V_{X}(e)) = V_{Y}(f(e))$ for all $e \in E_{X}$.
\end{enumerate}
\end{definition}
For us, a graph will be a particular case of a multigraph.
\begin{definition}
\hfill
\begin{enumerate}
\item A graph is a multigraph with no loops and such that $V_{X}(e)$ is distinct for every edge $e \in E_{X}$.
\item If $X$ is a multigraph and $v$ is a vertex of $X$, then we denote by $d_{X}(v)$ the number of edges incident to $v$.  Here, a loop is counted twice.  The quantity $d_{X}(v)$ is called the valency (or the degree) of $v$.
\item A multigraph is said to be finite if both $V_{X}$ and $E_{X}$ are finite.  
\end{enumerate}
\end{definition}
Each edge of a multigraph $X$ can be given two different orientations in an obvious way.  If $e$ is an oriented edge of a graph, then it is clear what we mean by the initial and terminal vertices of $e$.  Also, if $e$ is an oriented edge of $X$, then we shall denote by $e^{-1}$ the same edge with opposite orientation.
\begin{definition}
\hfill
\begin{enumerate}
\item A path in $X$ consists of a sequence of oriented edges $e_{1} \cdot \ldots \cdot e_{m}$ such that the terminal vertex of $e_{i}$ is the same as the initial vertex of $e_{i+1}$ for all $i=1,\ldots,m-1$.  The initial vertex of $e_{1}$ is called the initial vertex of the path and the terminal vertex of $e_{m}$ is called the terminal vertex of the path.
\item A multigraph $X$ is connected if given any $v,v' \in V_{X}$, there is a path in $X$ going from $v$ to $v'$.
\item If $C = e_{1} \cdot \ldots \cdot e_{m}$ is a path, then its length is $m$ and is denoted by $\nu(C)$.
\item If $C = e_{1} \cdot \ldots \cdot e_{m}$ and $D = \varepsilon_{1}\cdot \ldots \cdot \varepsilon_{t}$ are paths such that the terminal vertex of $C$ is the same as the initial vertex of $D$, then we denote the path $e_{1} \cdot \ldots \cdot e_{m}\cdot \varepsilon_{1}\cdot \ldots \cdot \varepsilon_{t}$ by $C \cdot D$.
\item A path $e_{1} \cdot \ldots \cdot e_{m}$ is called closed if its initial vertex is the same as its terminal vertex.
\item A closed path $e_{1} \cdot \ldots \cdot e_{m}$ such that no edge is repeated independently of the orientation and such that the terminal vertex of $e_{i}$ is different than the terminal vertex of $e_{j}$ whenever $i \neq j$ is called a cycle.
\item A spanning tree of $X$ is a connected subgraph containing all of the vertices of $X$ and no cycles.  The number of spanning trees of $X$ is denoted by $\kappa_{X}$.
\end{enumerate}
\end{definition}

\emph{Throughout this paper, by a multigraph we will always mean a finite connected multigraph}.  From now on, we label the vertices $V_{X} = \{v_{1},\ldots, v_{n}\} $ so that $|V_{X}| = n$.  Some matrices attached to multigraphs are very important.
\begin{definition}
Let $X$ be a multigraph.
\begin{enumerate}
\item The adjacency matrix $A$ attached to $X$ is the $n \times n$ matrix $A=(a_{ij})$ defined via
\begin{equation*}
a_{ij} = 
\begin{cases}
\text{Twice the number of loops at the vertex }i, &\text{ if } i=j;\\
\text{The number of edges connecting the $i$th vertex to the $j$th vertex}, &\text{ if } i \neq j.
\end{cases}
\end{equation*}
\item The degree matrix $D$ attached to $X$ is the $n \times n$ diagonal matrix $D = (d_{ij})$ defined via $d_{ii} = d_{X}(v_{i})$.
\item The matrix $D - A$ is called the Laplacian matrix attached to $X$ and is denoted by $Q$.
\end{enumerate}
\end{definition}
All these matrices contain information about the corresponding multigraph $X$.  For example, we have the following important result sometimes known as Kirchhoff's theorem.
\begin{theorem} \label{Kirchhoff}
Let $X$ be a connected multigraph and $Q$ the corresponding Laplacian matrix.  Then ${\rm adj}(Q) = \kappa_{X} \cdot J$, where ${\rm adj}$ denotes the adjoint (or adjugate) of a matrix, and $J$ is the $n \times n$ matrix whose entries are all equal to $1$.
\end{theorem}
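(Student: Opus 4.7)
My plan is to reduce the statement to the classical matrix-tree theorem in two steps: first show that $\mathrm{adj}(Q)$ is a scalar multiple of the matrix $J$, then identify the scalar as $\kappa_X$.

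For the first step, I observe that every row sum and every column sum of $Q = D - A$ vanishes: indeed, $\sum_{j} a_{ij} = d_X(v_i) = d_{ii}$ since loops are counted twice on both sides. Thus $Q \mathbf{1} = 0$ and $\mathbf{1}^T Q = 0$, where $\mathbf{1}$ denotes the all-ones column vector. Connectedness of $X$ forces $\ker Q = \mathbb{R}\cdot\mathbf{1}$, which I would derive from the identity $x^T Q x = \sum_{e \in E_X} (x_{i(e)} - x_{j(e)})^2$: vanishing of this sum forces $x$ to be constant on each connected component, and there is only one. Since $Q$ is singular, $Q \cdot \mathrm{adj}(Q) = \det(Q) \cdot I = 0$, so every column of $\mathrm{adj}(Q)$ lies in $\ker Q$, hence is a scalar multiple of $\mathbf{1}$. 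Symmetry of $Q$ transfers to $\mathrm{adj}(Q)$, so the same applies to rows, forcing $\mathrm{adj}(Q) = c \cdot J$ for some scalar $c$.

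For the second step, I compute the $(1,1)$-entry of $\mathrm{adj}(Q)$, which by definition is $\det(Q_{11})$, the determinant of the principal submatrix obtained by deleting the first row and column of $Q$. I fix an arbitrary orientation of each non-loop edge of $X$ and form the signed incidence matrix $B$: an $n \times m$ matrix (with $m$ the number of non-loop edges) whose column indexed by an edge $e$ oriented from $v_i$ to $v_j$ has $+1$ in row $i$, $-1$ in row $j$, and $0$ elsewhere. A direct check against the degree and adjacency conventions gives $Q = B B^T$. Writing $\widetilde{B}$ for $B$ with its first row removed, the Cauchy-Binet formula produces
\[
\det(Q_{11}) \;=\; \det(\widetilde{B}\widetilde{B}^T) \;=\; \sum_{S} \det(\widetilde{B}_S)^2,
\]
where $S$ ranges over $(n-1)$-element subsets of the column set and $\widetilde{B}_S$ is the corresponding square submatrix.

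The final combinatorial step identifies $\det(\widetilde{B}_S)$ as $\pm 1$ when the edges indexed by $S$ form a spanning tree of $X$ and $0$ otherwise. The dichotomy is clean: an $(n-1)$-edge acyclic subgraph on $n$ vertices is necessarily a spanning tree, so if $S$ is not a spanning tree, the subgraph contains a cycle, and an alternating signed sum of the columns indexed by that cycle's edges vanishes in $B$, hence also in $\widetilde{B}_S$. In the spanning-tree case, a leaf-peeling induction---pick any leaf other than $v_1$, expand along the corresponding row which has a single nonzero entry $\pm 1$, and iterate---yields $\det(\widetilde{B}_S) = \pm 1$. Summing gives $\det(Q_{11}) = \kappa_X$, and hence $c = \kappa_X$. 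The main obstacle lies in this last combinatorial evaluation of $\det(\widetilde{B}_S)$; the rest is a compact linear-algebra argument driven entirely by the kernel of $Q$.
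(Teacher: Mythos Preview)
Your argument is correct and is precisely the classical proof via the incidence matrix and the Cauchy--Binet formula; the paper does not supply its own argument but simply refers to Biggs, \emph{Algebraic Graph Theory}, Theorem~6.3, which proceeds in exactly this way for simple graphs. Your observation that loops cancel in $Q=D-A$ (so that $Q=BB^{T}$ with $B$ built only from non-loop edges, and the quadratic form $x^{T}Qx$ is unaffected by loops) is exactly the adaptation to multigraphs that the paper alludes to.
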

\begin{proof}
See Theorem $6.3$ on page $39$ of \cite{Biggs:1993} for example.  The author gives a proof for graphs only, but the proof can be adapted to multigraphs.
\end{proof}

\subsection{The Jacobian of a multigraph}
The following section is based on Part $1$ of \cite{Corry/Perkinson:2018} except that we allow loops in our multigraphs.  Recall that all our multigraphs are assumed to be finite and connected.

The divisor group on $X$ is defined to be the free abelian group on the vertices $V_{X}$.  It is an abelian group denoted by ${\rm Div}(X)$.  If $D = \sum_{v \in V_{X}}n_{v} \cdot v \in {\rm Div}(X)$, then we define
$${\rm deg}(D) = \sum_{v \in V_{X}} n_{v}.$$
This gives a group morphism ${\rm deg}:{\rm Div}(X) \longrightarrow \mathbb{Z}$ whose kernel will be denoted by ${\rm Div}^{\circ}(X)$.  We let $\mathcal{M}(X) = \mathbb{Z}^{V_{X}}$ be the set of $\mathbb{Z}$-valued functions on $V_{X}$.  For $v \in V_{X}$, we define $\chi_{v} \in \mathcal{M}(X)$ via
\begin{equation*}
\chi_{v}(v_{0}) = 
\begin{cases}
1, &\text{ if } v = v_{0};\\
0, &\text{ if } v \neq v_{0}.
\end{cases}
\end{equation*}
The functions $\chi_{v}$, as $v$ runs over $V_{X}$, form a $\mathbb{Z}$-basis for $\mathcal{M}(X)$.  One then defines a group morphism ${\rm div}:\mathcal{M}(X) \longrightarrow {\rm Div}(X)$ on the basis elements $\chi_{v}$ (and extending by $\mathbb{Z}$-linearity) via 
$$\chi_{v_{0}} \mapsto {\rm div}(\chi_{v_{0}}) = \sum_{v \in V_{X}} \rho_{v}(v_{0}) \cdot v, $$
where
\begin{equation*}
\rho_{v}(v_{0}) = 
\begin{cases}
 d_{X}(v_{0}) - 2\cdot \text{number of loops at $v_{0}$}, &\text{ if } v = v_{0}; \\
-\text{number of edges from $v$ to $v_{0}$}, &\text{ if } v \neq v_{0}.
\end{cases}
\end{equation*}
The reader will notice that the map ${\rm div}$ is given by the Laplacian matrix $Q = D - A$ after an appropriate choice of $\mathbb{Z}$-bases for $\mathcal{M}(X)$ and ${\rm Div}(X)$.  We let ${\rm Pr}(X) = {\rm div}(\mathcal{M}(X))$ which is a subgroup of ${\rm Div}(X)$ and furthermore, we let ${\rm Pic}(X) = {\rm Div}(X)/{\rm Pr}(X)$. Note that ${\rm Pr}(X) \subseteq {\rm Div}^{\circ}(X)$, and thus one also defines ${\rm Jac}(X) = {\rm Div}^{0}(X)/{\rm Pr}(X)$.
\begin{theorem}
Let $X$ be a connected multigraph.  Then ${\rm Jac}(X)$ is a finite group and furthermore $|{\rm Jac}(X)| = \kappa_{X}$.
\end{theorem}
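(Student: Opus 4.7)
The plan is to split the proof into a finiteness step (via a rank count) and an order computation that reduces to a principal minor of the Laplacian which Theorem~\ref{Kirchhoff} handles directly. Finiteness follows from a rank comparison: since $Q$ is symmetric with zero row sums, $Q\mathbf 1 = 0$ for the all-ones vector $\mathbf 1\in\mathcal{M}(X)$, and the connectedness of $X$ (together with a standard maximum-principle argument for harmonic $\mathbb Z$-valued functions, or equivalently the fact that $0$ is a simple eigenvalue of the Laplacian of a connected multigraph) forces $\ker(\mathrm{div})=\mathbb Z\cdot \mathbf 1$. Consequently $\mathrm{Pr}(X) = \mathrm{div}(\mathcal{M}(X))$ is free abelian of rank $n-1$, and since $\mathrm{Div}^{\circ}(X)=\ker(\mathrm{deg})$ is likewise free of rank $n-1$ and contains $\mathrm{Pr}(X)$, the quotient $\mathrm{Jac}(X)$ is finite.

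For the cardinality, I would work with the bases $\{v_j - v_n\}_{j=1}^{n-1}$ of $\mathrm{Div}^{\circ}(X)$ and $\{\mathrm{div}(\chi_{v_j})\}_{j=1}^{n-1}$ of $\mathrm{Pr}(X)$. The latter is indeed a basis: $\sum_{j=1}^{n}\mathrm{div}(\chi_{v_j})=\mathrm{div}(\mathbf 1)=0$ makes the $n$-th generator redundant, and the remaining $n-1$ divisors are independent by the rank count above. Expanding $\mathrm{div}(\chi_{v_j})=\sum_i Q_{ij}\,v_i = \sum_{i<n}Q_{ij}(v_i-v_n) + \bigl(\sum_i Q_{ij}\bigr) v_n$ and using the vanishing of the $j$-th column sum of $Q$, the change-of-basis matrix between these two bases is exactly the reduced Laplacian $Q_0$ obtained from $Q$ by deleting its $n$-th row and column. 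Thus $|\mathrm{Jac}(X)| = |\det(Q_0)|$.

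To finish, $\det(Q_0)$ is the $(n,n)$-minor of $Q$, which equals the $(n,n)$-entry of $\mathrm{adj}(Q)$ since the cofactor sign $(-1)^{2n}$ is $+1$. Theorem~\ref{Kirchhoff} gives $(\mathrm{adj}(Q))_{nn}=\kappa_X$, whence $|\mathrm{Jac}(X)|=\kappa_X$. The main technical obstacle in this plan is the kernel calculation $\ker(\mathrm{div})=\mathbb Z\cdot\mathbf 1$, which is where connectedness of $X$ genuinely enters; once this is in hand, the rest is change-of-basis bookkeeping coupled with Theorem~\ref{Kirchhoff}.
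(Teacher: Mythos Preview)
Your argument is correct. The paper itself does not give a proof of this theorem; it simply cites Remark~2.38 of \cite{Corry/Perkinson:2018}. What you have written is essentially the standard argument one finds spelled out in that reference (and in many other places): identify $\mathrm{div}$ with the Laplacian $Q$, use connectedness to pin down $\ker Q=\mathbb{Z}\cdot\mathbf{1}$, and then compute the index $[\mathrm{Div}^{\circ}(X):\mathrm{Pr}(X)]$ as the determinant of a reduced Laplacian, which Theorem~\ref{Kirchhoff} evaluates as $\kappa_X$. Your change-of-basis computation is clean and accurate (in particular, the column-sum vanishing of $Q$ is exactly what kills the $v_n$-coefficient), and your identification of $\det(Q_0)$ with the $(n,n)$-entry of $\mathrm{adj}(Q)$ is the right way to invoke Theorem~\ref{Kirchhoff}. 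The only point you flag as a ``technical obstacle''---that $\ker(\mathrm{div})=\mathbb{Z}\cdot\mathbf{1}$---is indeed where connectedness enters, and either the maximum-principle argument or the simple-eigenvalue fact suffices; you might just state explicitly which one you are using rather than offering both as alternatives.
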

\begin{proof}
See Remark $2.38$ on page $35$ of  \cite{Corry/Perkinson:2018}.
\end{proof}

\subsection{Ihara zeta functions}
\emph{From now on, not only do we assume that our multigraphs are finite and connected, but we also assume that they do not contain vertices of degree one}. 

\begin{definition}
Let $X$ be a multigraph.  
\begin{enumerate}
\item A path $e_{1} \cdot \ldots \cdot e_{m}$ has a backtrack if $e_{i+1} = e_{i}^{-1}$ for some $i \in \{1,\ldots,m-1\}$.
\item A path $e_{1} \cdot \ldots \cdot e_{m}$ has a tail if $e_{1} = e_{m}^{-1}$.
\item A closed path is called a prime path if 
\begin{enumerate}
\item It has no backtracks,
\item It has no tails,
\item It is not of the form $C^{a}$ for some closed path $C$ and some integer $a \ge 2$.
\end{enumerate}
\end{enumerate}
\end{definition}

One defines an equivalence relation on closed paths as follows.
\begin{definition} \label{equiv}
Two closed paths $e_{1}\cdot \ldots \cdot e_{m}$ and $\varepsilon_{1} \cdot \ldots \cdot \varepsilon_{t}$ are called equivalent if $m = t$ and if there exists an $m$-cycle $\sigma$ such that $e_{i} = \varepsilon_{\sigma(i)}$ for $i=1,\ldots,m$.
\end{definition}

The following definition is essential for the definition of the Ihara zeta function of a multigraph.
\begin{definition}
Let $X$ be a multigraph.  A prime in $X$ is an equivalence class of prime paths for the equivalence relation of Definition \ref{equiv}.
\end{definition}
We shall typically denote a prime by $\mathfrak{p}$.  The Ihara zeta function of a multigraph $X$ is defined to be
$$\zeta_{X}(u) = \prod_{\mathfrak{p}}(1-u^{\nu(\mathfrak{p})})^{-1}, $$
where the product is over all primes $\mathfrak{p}$ of $X$.  This product is usually infinite and converges if $|u|$ is small enough.  Interestingly, the Ihara zeta function of a multigraph is the reciprocal of a polynomial in $u$ and this shows that it can be extended to a meromorphic function on $\mathbb{C}$ (in fact a rational function):

\begin{theorem}[Three-term determinant formula] \label{three_term}
Let $X$ be a multigraph, $A$ the adjacency matrix and $D$ the degree matrix of $X$.  Let also $r = |E_{X}| - |V_{X}| + 1$.  Then, we have
$$\frac{1}{\zeta_{X}(u)} = (1-u^{2})^{r-1} \cdot {\rm det}(I - Au + (D - I)u^{2}). $$
\end{theorem}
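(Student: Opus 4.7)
The plan is to follow the classical two-step reduction due to Bass for the Ihara determinant formula.

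\emph{Step 1 (edge zeta formula).} Let $m = |E_X|$. On the set of $2m$ oriented edges, define the $2m \times 2m$ Hashimoto edge-adjacency matrix $W$ by $W_{e,e'} = 1$ if the terminal vertex of $e$ is the initial vertex of $e'$ and $e' \neq e^{-1}$, and $W_{e,e'} = 0$ otherwise. Closed walks of length $k$ in the digraph with adjacency $W$ are precisely the closed paths of length $k$ in $X$ that have neither a backtrack nor a tail. Taking the logarithmic derivative of the Euler product defining $\zeta_X(u)$, counting such closed paths as $\mathrm{tr}(W^k)$, and performing M\"obius inversion over the cyclic-rotation equivalence classes of Definition \ref{equiv} yields
$$\log \zeta_X(u) = \sum_{k \geq 1} \frac{\mathrm{tr}(W^k)}{k} u^k = -\log \det(I_{2m} - uW),$$
hence $\zeta_X(u)^{-1} = \det(I_{2m} - uW)$.

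\emph{Step 2 (Bass's identity).} It remains to establish
$$\det(I_{2m} - uW) = (1-u^2)^{r-1} \det(I_n - uA + u^2(D-I)).$$
Introduce the $n \times 2m$ start- and terminal-matrices $S, T$ with $S_{v,e} = 1$ (resp.\ $T_{v,e} = 1$) iff $v$ is the initial (resp.\ terminal) vertex of $e$, and let $J$ denote the $2m \times 2m$ involution that sends $e$ to $e^{-1}$. A direct count gives the four identities $T = SJ$, $SS^T = D$, $ST^T = A$, and $T^T S = W + J$. Form the $(n + 2m) \times (n + 2m)$ block matrix
$$\Phi(u) = \begin{pmatrix} I_n & uS \\ uT^T & I_{2m} + uJ \end{pmatrix}$$
and factor it in two ways as a product of block-triangular matrices, yielding on one side diagonal blocks $I_n - uA + u^2(D-I)$ and $I_{2m} + uJ$, and on the other $(1-u^2)\,I_n$ and $I_{2m} - uW$. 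Equating the two expressions for $\det \Phi(u)$ and using $\det(I_{2m} + uJ) = (1 - u^2)^m$ (valid because $J$ is a fixed-point-free involution on $2m$ oriented edges and so has $m$ eigenvalues $+1$ and $m$ eigenvalues $-1$), together with the Euler-characteristic relation $m - n = r - 1$, gives Bass's identity. Combining with Step 1 proves the theorem.

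\emph{Main obstacle.} The principal technical difficulty lies in identifying the correct auxiliary block matrix $\Phi(u)$ in Step 2 so that its two factorisations realise the advertised diagonal blocks, and in tracking loops consistently throughout: in this paper's conventions a loop $e$ at a vertex $v$ has two distinct oriented versions $e$ and $e^{-1}$, which subtly shapes the definitions of $W$, $J$, $A$, and $D$, and in particular makes $J$ fixed-point-free. A cleaner alternative is to cite the Ihara--Bass theorem as presented in Terras's book and observe that its proof adapts to multigraphs in the same spirit as the authors' remark following Theorem \ref{Kirchhoff}.
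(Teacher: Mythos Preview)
The paper does not actually prove this theorem: its entire proof is the sentence ``We refer the reader to Theorem~2.5 on page~17 of \cite{Terras:2011}.''  Your outline is precisely the Bass argument that Terras presents there, so you have supplied what the paper chose to omit; your closing remark (``a cleaner alternative is to cite the Ihara--Bass theorem as presented in Terras's book'') is in fact exactly what the authors do.

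One concrete correction to your Step~2: the block matrix $\Phi(u)$ should carry a factor of $u$ on only \emph{one} of its off-diagonal blocks, for instance
\[
\Phi(u) = \begin{pmatrix} I_n & uS \\ T^{t} & I_{2m} + uJ \end{pmatrix}.
\]
With this $\Phi$, the Schur complement with respect to $I_n$ gives $\det(I_{2m}+uJ-uT^{t}S)=\det(I_{2m}-uW)$ (using $T^{t}S=W+J$), while the Schur complement with respect to $I_{2m}+uJ$ gives $(1-u^{2})^{m-n}\det(I_n-uA+u^{2}(D-I))$ (using $(I+uJ)^{-1}=(1-u^{2})^{-1}(I-uJ)$, $SJ=T$, $ST^{t}=A$, $TT^{t}=D$), and equating the two yields Bass's identity.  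With your $\Phi$ carrying $u$ on \emph{both} off-diagonal blocks, the first Schur complement becomes $\det(I_{2m}+uJ-u^{2}W-u^{2}J)$ and the second produces $(1-u^{2})I_n-u^{2}A+u^{3}D$ in place of $(1-u^{2})I_n-uA+u^{2}D$, so neither factorisation delivers the advertised diagonal blocks.  This is exactly the ``main obstacle'' you flagged; once the stray $u$ is removed the argument goes through.
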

\begin{proof}
We refer the reader to Theorem $2.5$ on page $17$ of \cite{Terras:2011}.  
\end{proof}
This last theorem is quite convenient.  For example, one can prove the following analogue to Dirichlet's class number formula in algebraic number theory which is an exercise on page $78$ of \cite{Terras:2011}, but we present a proof for the reader's convenience.

\begin{theorem} \label{analogue_dir}
Let $X$ be a multigraph, and assume that $r= |E_{X}| - |V_{X}| + 1 \neq 1$.  Then, one has
$${\rm ord}_{u=1}\left(\zeta_{X}(u)^{-1}\right) = r \text{ and } \zeta_{X}^{*}(1) = (-1)^{r+1} \cdot 2^{r} \cdot (r-1) \cdot \kappa_{X},$$
where $\zeta_{X}^{*}(1)$ denotes the first non-vanishing Taylor coefficent of $\zeta_{X}(u)^{-1}$ at $u=1$.
\end{theorem}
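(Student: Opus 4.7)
\medskip

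\noindent\textbf{Proof Proposal.} My plan is to apply the three-term determinant formula (Theorem \ref{three_term}) and analyze the two factors at $u=1$ separately. Writing $M(u) = I - Au + (D-I)u^2$ and $h(u) = \det M(u)$, we have
\[
\zeta_{X}(u)^{-1} = (1-u)^{r-1}(1+u)^{r-1}\, h(u),
\]
so the first factor contributes a zero of order exactly $r-1$ at $u=1$ and has leading coefficient $(-1)^{r-1}\cdot 2^{r-1}$ in the variable $u-1$. Everything then reduces to showing that $h(u)$ vanishes to order exactly $1$ at $u=1$ and to computing $h'(1)$.

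Evaluating at $u=1$ gives $M(1) = D-A = Q$, and since $X$ is connected the Laplacian $Q$ has a one-dimensional kernel spanned by the all-ones vector $\mathbf{1}$, so $h(1)=0$. To compute the derivative I would use the standard identity $h'(u) = \mathrm{tr}\bigl(\mathrm{adj}(M(u))\cdot M'(u)\bigr)$ together with Kirchhoff's theorem, which gives $\mathrm{adj}(M(1)) = \mathrm{adj}(Q) = \kappa_{X}\cdot J$ where $J$ is the all-ones matrix. Since $M'(1) = -A + 2(D-I) = 2D - A - 2I$, this yields
\[
h'(1) = \kappa_{X}\cdot \mathrm{tr}\bigl(J(2D - A - 2I)\bigr) = \kappa_{X}\cdot \Sigma,
\]
where $\Sigma$ is the sum of all entries of $2D - A - 2I$.

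The sums $\sum_{i}d_{X}(v_{i}) = 2|E_{X}|$ and $\sum_{i,j}a_{ij} = 2|E_{X}|$ give $\Sigma = 4|E_{X}| - 2|E_{X}| - 2|V_{X}| = 2(r-1)$, so
\[
h'(1) = 2(r-1)\kappa_{X}.
\]
Under the hypothesis $r \neq 1$ (and noting $\kappa_{X}\geq 1$ since $X$ is connected), this is nonzero, so $h$ vanishes to order exactly $1$ at $u=1$ with local expansion $h(u) = -2(r-1)\kappa_{X}(1-u) + O((1-u)^{2})$. Multiplying by $(1-u)^{r-1}(1+u)^{r-1} = 2^{r-1}(1-u)^{r-1} + O((1-u)^{r})$ and converting to the variable $u-1$, I read off both that $\mathrm{ord}_{u=1}\zeta_{X}(u)^{-1} = r$ and that the leading Taylor coefficient equals $(-1)^{r+1}\cdot 2^{r}\cdot (r-1)\cdot\kappa_{X}$.

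The one step that needs a little care is the identification $\mathrm{adj}(Q) = \kappa_{X}J$; this is exactly Kirchhoff's theorem as stated in Theorem \ref{Kirchhoff}, so it is available for free. Everything else is linear-algebra bookkeeping, so I do not anticipate a real obstacle.
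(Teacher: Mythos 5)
Your proposal is correct and follows essentially the same route as the paper: both rest on the three-term determinant formula, Jacobi's formula for $h'(u)=\mathrm{tr}\bigl(\mathrm{adj}(M(u))M'(u)\bigr)$, Kirchhoff's theorem $\mathrm{adj}(Q)=\kappa_X J$, and the degree-sum count giving $h'(1)=2(r-1)\kappa_X$. The only difference is cosmetic bookkeeping (you multiply local Taylor expansions of the two factors, while the paper applies Leibniz's rule to the $r$-th derivative), so there is nothing further to address.
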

\begin{proof}
Let $g(u) = (1-u^{2})^{r-1}$ and let $h(u) = {\rm det}(I - Au + (D - I)u^{2})$.  Both $g$ and $h$ are polynomials in $u$.  By Leibniz's formula, one has
$$\frac{d^{r}}{du^{r}} \zeta_{X}(u)^{-1} = \sum_{i=0}^{r} \binom{r}{i}g^{(r-i)}(u) h^{(i)}(u).$$
Since $u=1$ is a zero of order $r-1$ for the polynomial $g$, we have
$$\frac{d^{r}}{du^{r}}\zeta_{X}(u)^{-1}\Big|_{u=1} = rg^{(r-1)}(1)h'(1) + g^{(r)}(1)h(1). $$
Note that $h(1) = {\rm det}(I - A + D - I) = {\rm det}(Q) = 0$.  Therefore, we get
$$\frac{d^{r}}{du^{r}}\zeta_{X}(u)^{-1}\Big|_{u=1} = rg^{(r-1)}(1)h'(1). $$
Now, a simple calculation shows that $g^{(r-1)}(1) = (-2)^{r-1} \cdot (r-1)!$, and thus we are left to show $h'(1) = 2 (r-1) \kappa_{X}$ in order to prove our claim.  Using Jacobi's formula, we calculate
\begin{equation*}
\begin{aligned}
\frac{d}{du}h(u) &= \frac{d}{du} {\rm det}(I - Au + (D - I)u^{2}) \\
&={\rm tr}\left({\rm adj}(I - Au + (D-I)u^{2})  \cdot \frac{d}{du} \left(I - Au + (D - I)u^{2} \right) \right) \\
&= {\rm tr}\left({\rm adj}(I - Au + (D-I)u^{2}) \cdot \left(-A + 2 (D - I)u \right) \right),
\end{aligned}
\end{equation*}
where ${\rm adj}$ denotes the adjoint of a matrix.  Thus, 
\begin{equation*}
\begin{aligned}
\frac{d}{du}h(u)\Big|_{u=1} &= {\rm tr}\left({\rm adj}(Q) (-A + 2(D - I)) \right) \\
&= {\rm tr}(\kappa_{X} \cdot J (-A +2(D - I))),
\end{aligned}
\end{equation*}
by Theorem \ref{Kirchhoff}.  Therefore, one has 
$$\frac{d}{du}h(u)\Big|_{u=1} = \kappa_{X} \cdot \left( {\rm tr}(JQ) + {\rm tr}(JD) + {\rm tr}(-2J) \right),$$
but we have
\begin{enumerate}
\item ${\rm tr}(JQ) = 0$,
\item ${\rm tr}(JD) = 2 \cdot |E_{X}|$ by the degree sum theorem,
\item ${\rm tr}(-2J) = -2 \cdot |V_{X}|$,
\end{enumerate}
and therefore we get
$$h'(1) = (2 |E_{X}|-2|V_{X}|) \cdot \kappa_{X} =  2(r-1) \kappa_{X},$$
as we wanted to show.
\end{proof}
\begin{remark} \label{cycle}
Let $X$ be a connected multigraph with no degree one vertices satisfying $r=1$.  Then $X$ is necessarily the cycle graph on $n$ vertices which we denote by $C_{n}$.  In this case, there are only two primes and $(\zeta_{X}(u))^{-1} = (1-u^{n})^{2}$.  It follows that 
$${\rm ord}_{u=1}(\zeta_{X}(u)^{-1}) = 2 \text{ and } \zeta_{X}^{*}(1) = 2  n^{2}. $$
\end{remark}

\section{Galois covers of multigraphs}
We now view a multigraph as a finite $CW$-complex of dimension one and as such we view our multigraphs as topological spaces.  One can then talk about covering spaces and Galois (or regular) covering spaces of multigraphs.  See for instance Chapter $6$ of \cite{Massey:1967}.  Note that a covering map of multigraphs is necessarily a morphism of multigraphs as previously defined in Definition \ref{first_def}, and an automorphism of a cover of multigraphs is necessarily an isomorphism of multigraphs.  Such a covering map is a $d$-to-$1$ function for some integer $d \ge 1$ which we will refer to as the degree of the cover.  (We point out that the word degree means two different things in this paper:  the degree of a divisor and the degree of a cover.  We hope that this will not cause any confusion.)  

Let $Y/X$ be a covering of a graph with projection map $\pi$ and let $\mathfrak{P}$ be a prime of $Y$.  If $\mathfrak{P} = [P]$ for some prime path $P$, then $\pi(P) = p^{f}$ for some prime path $p$ of $X$ and some positive integer $f$.  Let $\mathfrak{p}$ be the equivalence class of $p$.  Then one says that $\mathfrak{P}$ lies above $\mathfrak{p}$ and $f$ is called the residual degree of $\mathfrak{P}$ over $\mathfrak{p}$.  

If we assume furthermore that the cover is Galois of degree $d$, say, and $r$ is the number of primes of $Y$ lying above $\mathfrak{p}$, one can show that $f \cdot r = d$.  (See part $(6)$ of Theorem $16.5$ on page $137$ of \cite{Terras:2011}.)  Again, assume that we have a prime $\mathfrak{P}$ of $Y$ lying above a prime $\mathfrak{p}$ of $X$.  Assume that $\mathfrak{P}$ corresponds to a prime path starting at $w$.  Let $v = \pi(w)$ and let $\sigma$ be a prime path in $X$ corresponding to $\mathfrak{p}$ and starting at $v$.  There is a unique lift $\tilde{\sigma}$ of $\sigma$ to $Y$ with initial vertex $w$.  Let $w'$ be the terminal vertex of $\tilde{\sigma}$.  One defines the Frobenius automorphism of $\mathfrak{P}$ over $\mathfrak{p}$, denoted by $\left(\frac{Y/X}{\mathfrak{P}} \right)$ to be the unique automorphism $g \in {\rm Aut}(Y/X)$ such that $g \cdot w = w'$.  One can show that this definition does not depend on any of the choices made above.  If $\mathfrak{P}_{1}$ and $\mathfrak{P}_{2}$ are two primes of $Y$ lying above the same prime $\mathfrak{p}$, then there exists $\sigma \in {\rm Aut}(Y/X)$ such that $\mathfrak{P}_{1}^{\sigma} =\mathfrak{P}_{2}$ and
$$\left(\frac{Y/X}{\mathfrak{P}_{2}} \right)= \sigma \cdot \left(\frac{Y/X}{\mathfrak{P}_{1}} \right) \cdot \sigma^{-1}.$$
Thus, if the cover has an abelian automorphism group, then the Frobenius automorphism depends only on $\mathfrak{p}$ and will be denoted by $\left(\frac{Y/X}{\mathfrak{p}} \right)$.

\subsection{Artin-Ihara $L$-functions} \label{artin_ihara_L}
Let $Y/X$ be a Galois cover of multigraphs.  We will denote ${\rm Aut}(Y/X)$ simply by $G$ and we shall assume from now on that $G$ is abelian.  We will refer to such a cover as an abelian cover.  We denote the group of characters of $G$ by $\widehat{G}$.  If $\chi \in \widehat{G}$, then the Artin-Ihara $L$-function is defined by the formal infinite product
$$L_{Y/X}(u,\chi) = \prod_{\mathfrak{p}} \left(1 - \chi\left(\left(\frac{Y/X}{\mathfrak{p}}\right) \right)u^{\nu(\mathfrak{p})} \right)^{-1}, $$
where the product is over all primes in $X$.  As for the Ihara zeta function, this product is usually infinite and it can be shown to converge when $|u|$ is small enough.  From now on, we let $\chi_{1}$ be the trivial character of the group $G$.  Note that $L_{Y/X}(u,\chi_{1}) = \zeta_{X}(u)$.
\begin{theorem}  \label{product_form}
Let $Y/X$ be an abelian cover of multigraphs.  Then one has
$$\zeta_{Y}(u) = \zeta_{X}(u) \cdot \prod_{\chi \neq \chi_{1}} L_{Y/X}(u,\chi). $$
\end{theorem}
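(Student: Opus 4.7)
The plan is to verify the identity by comparing Euler factors one prime of $X$ at a time, exploiting the fact that $L_{Y/X}(u,\chi_1) = \zeta_X(u)$, so the claim is equivalent to the product formula
\[
\zeta_{Y}(u) = \prod_{\chi \in \widehat{G}} L_{Y/X}(u,\chi).
\]
Both sides are formal Euler products indexed differently (primes of $Y$ on the left, primes of $X$ on the right), so the key is to regroup the primes of $Y$ over their images in $X$ and show that the corresponding factors agree.

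Fix a prime $\mathfrak{p}$ of $X$ and let $\mathfrak{P}_{1},\ldots,\mathfrak{P}_{r}$ be the primes of $Y$ lying above $\mathfrak{p}$, each with common residual degree $f$; by the relation $fr=d=|G|$ recalled in the text, and because $G$ is abelian, every $\mathfrak{P}_{i}$ gives the same Frobenius element, which I shall call $\sigma_{\mathfrak{p}} \in G$. Since $\nu(\mathfrak{P}_{i}) = f \cdot \nu(\mathfrak{p})$ for each $i$, the contribution of these primes to $\zeta_{Y}(u)$ is
\[
\prod_{i=1}^{r} \bigl(1-u^{\nu(\mathfrak{P}_{i})}\bigr)^{-1} = \bigl(1 - u^{f\nu(\mathfrak{p})}\bigr)^{-r}.
\]
On the right-hand side, the contribution of $\mathfrak{p}$ across all characters is
\[
\prod_{\chi \in \widehat{G}} \bigl(1 - \chi(\sigma_{\mathfrak{p}})\,u^{\nu(\mathfrak{p})}\bigr)^{-1}.
\]

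The step that links the two sides is the standard character-theoretic identity: for any element $\sigma \in G$ of order $f$, one has
\[
\prod_{\chi \in \widehat{G}} \bigl(1 - \chi(\sigma)\,T\bigr) = \bigl(1 - T^{f}\bigr)^{|G|/f},
\]
which follows by decomposing $\widehat{G}$ over the subgroup $\langle \sigma \rangle^{\perp}$ of characters trivial on $\sigma$ and noting that the nontrivial characters of the cyclic group $\langle \sigma \rangle$ give exactly the roots of $X^{f}-1$. Applied with $T = u^{\nu(\mathfrak{p})}$ and using that the order of $\sigma_{\mathfrak{p}}$ equals the residual degree $f$ (this is the content of the decomposition group theory in the abelian setting, since the decomposition group is generated by Frobenius and has order $f$, being the stabilizer of $\mathfrak{P}_{i}$ under the transitive $G$-action on the $r$ primes above $\mathfrak{p}$), this yields $(1 - u^{f\nu(\mathfrak{p})})^{r}$ after taking $|G|/f = r$. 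Inverting gives exactly the local factor computed on the $\zeta_{Y}$ side. Taking the product over all primes $\mathfrak{p}$ of $X$, interpreted formally, produces the desired identity of formal products, and then of rational functions by analytic continuation via Theorem \ref{three_term}.

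The main obstacle I anticipate is the cleanest identification of the order of the Frobenius $\sigma_{\mathfrak{p}}$ with the residual degree $f$; once that is in hand, the proof is a bookkeeping exercise in character sums. Writing out carefully why $\sigma_{\mathfrak{p}}$ has order exactly $f$ (rather than a divisor of $f$) requires the lift-and-concatenate construction: lifting $\sigma$ from $\mathfrak{p}$ exactly $k$ times produces a closed path in $Y$ precisely when $\sigma_{\mathfrak{p}}^{k}=1$, and the minimal such $k$ is by definition $f$. Once this is pinned down, the remainder is the character identity above, which is elementary.
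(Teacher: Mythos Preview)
The paper does not actually prove this theorem: its entire proof is a citation to Corollary~18.11 of \cite{Terras:2011}. Your Euler-factor comparison is correct and is the standard direct argument in the abelian case; in particular, the identity $\prod_{\chi \in \widehat{G}}(1-\chi(\sigma)T)=(1-T^{f})^{|G|/f}$ for $\sigma$ of order $f$ is exactly what is needed, and your justification that the Frobenius $\sigma_{\mathfrak{p}}$ has order precisely $f$ (via the lift-and-concatenate description of the primes above $\mathfrak{p}$) is the right one. Terras' treatment phrases the result for arbitrary Galois covers by decomposing the regular representation of $G$ and invoking the induction/inflation formalism of Artin--Ihara $L$-functions; specialized to abelian $G$ this reduces to your computation. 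So your route is a self-contained, more elementary proof of the abelian case, whereas the paper simply defers to the literature.
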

\begin{proof}
See Corollary $18.11$ in \cite{Terras:2011}.
\end{proof}

If we write 
$$L_{Y/X}(u,\chi)^{-1} = c(\chi) (u-1)^{r(\chi)} + \ldots $$
with $c(\chi) \neq 0$, then it becomes a natural question to ask what $c(\chi)$ and $r(\chi)$ are.  The number $c(\chi_{1})$ is in fact $\zeta_{X}^{*}(1)$ of Theorem \ref{analogue_dir} and Remark \ref{cycle}.  We shall also use $L_{Y/X}^{*}(1,\chi)$ to denote $c(\chi)$.  

From now on, we fix a vertex $w_{i}$ of $Y$ in the fiber of $v_{i}$ for each $i=1,\ldots,n$.
\begin{definition}
Let $Y/X$ be an abelian cover of multigraphs with automorphism group $G$.
\begin{enumerate}
\item For $\sigma \in G$, we define the matrix $A(\sigma)$ to be the $n \times n$ matrix $A(\sigma)=(a_{ij}(\sigma))$ defined via
\begin{equation*}
a_{ij}(\sigma) = 
\begin{cases}
\text{Twice the number of loops at the vertex }w_{i}, &\text{ if } i=j \text{ and } \sigma = 1;\\
\text{The number of edges connecting $w_{i}$ to $w_{j}^{\sigma}$}, &\text{ otherwise}.
\end{cases}
\end{equation*}
\item If $\chi \in \widehat{G}$, then we let
$$A_{\chi} = \sum_{\sigma \in G} \chi(\sigma) \cdot A(\sigma). $$
\end{enumerate}
\end{definition}

The following theorem is again very convenient.

\begin{theorem}[Three-term determinant formula for $L$-functions] \label{three_term_L}
Let $Y/X$ be an abelian cover of multigraphs with automorphism group $G$ and let $\chi \in \widehat{G}$.  Then, we have
$$\frac{1}{L_{Y/X}(u,\chi)} = (1-u^{2})^{r_{X}-1} \cdot {\rm det}(I - A_{\chi}u + (D - I)u^{2}), $$
where again $r_{X} = |E_{X}| - |V_{X}| + 1$.
\end{theorem}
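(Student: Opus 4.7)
The plan is to extend Bass's proof of Theorem \ref{three_term} to the equivariant setting, using a twisted edge zeta function approach. First, I would introduce the twisted edge adjacency matrix $W_{1}(\chi)$ of size $2|E_{X}| \times 2|E_{X}|$, indexed by the oriented edges of $X$: its $(e,e')$-entry is $\chi(\sigma_{e})$ when $e$ and $e'$ are compatible (the terminal vertex of $e$ is the initial vertex of $e'$, and $e' \neq e^{-1}$), and zero otherwise. Here $\sigma_{e} \in G$ is the edge Frobenius, obtained by lifting $e$ to $Y$ starting at the distinguished vertex $w_{i}$ over its initial vertex $v_{i}$ and reading off which $G$-translate $w_{j}^{\sigma}$ of the distinguished vertex over its terminal vertex one arrives at.

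Next, I would establish the edge-matrix form of the $L$-function,
$$L_{Y/X}(u,\chi)^{-1} = \det\bigl(I - u W_{1}(\chi)\bigr),$$
by matching logarithmic derivatives. From the Euler product, $u \frac{d}{du}\log L_{Y/X}(u,\chi)$ expands as $\sum_{\mathfrak{p}}\sum_{k \ge 1} \chi(\mathrm{Frob}_{\mathfrak{p}})^{k}\,\nu(\mathfrak{p})\,u^{k\nu(\mathfrak{p})}$, while $-u \frac{d}{du} \log \det(I - u W_{1}(\chi)) = \sum_{m \ge 1} \mathrm{tr}(W_{1}(\chi)^{m})\,u^{m}$. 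The combinatorial core is that $\mathrm{tr}(W_{1}(\chi)^{m})$ equals the sum, over tailless backtrackless closed paths $C$ of length $m$ in $X$, of $\chi(\mathrm{Frob}_{C})$; grouping such paths by their underlying primes recovers the Euler product, and the cyclic-rotation multiplicity $\nu(\mathfrak{p})$ comes out exactly as required.

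Finally, I would prove the equivariant Bass identity
$$\det\bigl(I - u W_{1}(\chi)\bigr) = (1-u^{2})^{r_{X}-1}\,\det\bigl(I - A_{\chi} u + (D - I) u^{2}\bigr)$$
by a block-matrix determinant computation, in direct parallel to Bass's classical argument: one forms an $(n + 2|E_{X}|) \times (n + 2|E_{X}|)$ block matrix whose blocks involve the twisted start/terminus vertex-edge incidence matrices of $X$ (twisted by $\chi$ on the edge Frobenius) together with $u$-dependent identity blocks, and computes its determinant in two ways. Combining this with the previous display gives the theorem. The main obstacle I expect is the equivariant Bass identity: the incidence matrices must be twisted consistently by $\chi$-values on edge Frobenii so that the products appearing in the block reduction assemble precisely into $A_{\chi}$ and $D$; in particular one must verify that loops (for which the notion of an orientation is subtle) and non-loop edges are handled uniformly. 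Once the correct twisting is set up, the classical Bass block reduction goes through verbatim.
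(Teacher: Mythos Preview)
Your outline is the standard Bass-type argument for the three-term determinant formula in the equivariant setting, and it is essentially correct. However, you should be aware that the paper does not actually prove this theorem: its entire ``proof'' consists of a reference to Theorem~18.15 in Terras's book \cite{Terras:2011}. So there is nothing to compare at the level of argument; the paper simply cites the result as known.

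That said, your sketch matches the approach taken in the reference the paper cites. Terras first establishes the edge $L$-function identity $L_{Y/X}(u,\chi)^{-1}=\det(I-uW_{1,\chi})$ via the logarithmic-derivative/trace computation you describe, and then deduces the three-term formula by an equivariant version of Bass's block-matrix proof, using twisted start/end incidence matrices $S_{\chi},T_{\chi}$ satisfying $S_{\chi}T_{\chi}^{t}=A_{\chi}$ and $S_{\chi}S_{\chi}^{t}=T_{\chi}T_{\chi}^{t}=D$. Your anticipated obstacle---getting the $\chi$-twists on the incidence matrices to line up so that the block reductions produce exactly $A_{\chi}$ and $D$, with loops handled consistently---is real but routine once the edge Frobenius $\sigma_{e}$ is defined as you did. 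One small point to watch: in the abelian case the character is one-dimensional, so the block computation genuinely reduces to scalar determinants; your write-up implicitly uses this, and it is fine here since the paper restricts to abelian covers.
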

\begin{proof}
We refer the reader to Theorem $18.15$ on page $156$ of \cite{Terras:2011}.  
\end{proof}

This last theorem allows us to prove:
\begin{proposition} \label{order_of_vanishing}
Let $Y/X$ be an abelian cover of multigraphs with Galois group $G$.  If $\chi$ is a non-trivial character, one has $r(\chi) = r_{X} -1$.
\end{proposition}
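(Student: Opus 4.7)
The starting point is Theorem \ref{three_term_L}, which factors $L_{Y/X}(u,\chi)^{-1}$ as $(1-u^2)^{r_X-1} \cdot h_\chi(u)$, where $h_\chi(u) = \det(I - A_\chi u + (D-I)u^2)$ is a polynomial. Since $(1-u^2)^{r_X-1}$ vanishes to order exactly $r_X - 1$ at $u=1$, this immediately yields the lower bound $r(\chi) \geq r_X - 1$ for every character $\chi$. Proving the proposition for non-trivial $\chi$ is therefore equivalent to showing the matching upper bound $r(\chi) \leq r_X - 1$, i.e. that $h_\chi(1) = \det(D - A_\chi) \neq 0$.

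The plan is to establish this matching upper bound by a global counting argument. Taking orders of vanishing at $u=1$ on both sides of the product formula in Theorem \ref{product_form} gives
$$\mathrm{ord}_{u=1}(\zeta_Y^{-1}) = \mathrm{ord}_{u=1}(\zeta_X^{-1}) + \sum_{\chi \neq \chi_1} r(\chi).$$
Assume first that $r_X \geq 2$. The cover is $|G|$-to-$1$ on both vertices and edges, so $|V_Y| = |G|\cdot |V_X|$ and $|E_Y| = |G|\cdot |E_X|$, which gives the Euler-characteristic style identity $r_Y = |G|(r_X - 1) + 1 \geq r_X \geq 2$. Hence Theorem \ref{analogue_dir} applies to both $X$ and $Y$, identifying the two outer orders as $r_X$ and $r_Y$, and rearranging produces
$$\sum_{\chi \neq \chi_1} r(\chi) = r_Y - r_X = (|G|-1)(r_X - 1).$$
This sum contains exactly $|G|-1$ terms, each at least $r_X - 1$ by the lower bound above. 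An inequality-meets-equality argument then forces $r(\chi) = r_X - 1$ for every non-trivial $\chi$.

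It remains to treat the case $r_X = 1$, in which $X$ is the cycle $C_n$ by Remark \ref{cycle}. Any connected abelian cover of a cycle is again a cycle (the fundamental group is $\mathbb{Z}$, so finite connected covers are indexed by finite-index subgroups), hence $Y \cong C_{|G|n}$, so $r_Y = 1$ as well, and Remark \ref{cycle} gives $\mathrm{ord}_{u=1}(\zeta_X^{-1}) = \mathrm{ord}_{u=1}(\zeta_Y^{-1}) = 2$. The displayed identity collapses to $\sum_{\chi \neq \chi_1} r(\chi) = 0$, and since each $r(\chi) \geq 0 = r_X - 1$, equality forces $r(\chi) = 0$ for every non-trivial $\chi$.

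The argument really only relies on two combinatorial inputs beyond results already stated: the Euler-characteristic identity $r_Y = |G|(r_X - 1) + 1$ for a $|G|$-sheeted cover, and the topological observation that connected abelian covers of $C_n$ are themselves cycles. Neither is an obstacle. The main conceptual point is that all the work has already been done by the three-term determinant formulas and the class number formula for $\zeta_X^{-1}$; the proposition then follows by pigeonholing the total order of vanishing against the automatic lower bounds coming from the $(1-u^2)^{r_X-1}$ factors.
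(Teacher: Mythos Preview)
Your proposal is correct and follows essentially the same route as the paper's proof: the three-term determinant formula gives the lower bound $r(\chi)\ge r_X-1$, the product formula plus the Euler-characteristic identity $r_Y=r_X+(|G|-1)(r_X-1)$ pins down the total, and pigeonhole forces equality term by term. The only difference is that you explicitly split off the case $r_X=1$ (where $X$ and $Y$ are cycles and Theorem~\ref{analogue_dir} does not literally apply), whereas the paper writes the identity $r_Y=r_X+\sum_{\chi\neq\chi_1}r(\chi)$ uniformly; your treatment is slightly more careful here, but the argument is the same.
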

\begin{proof}
Indeed, the decomposition
$$\zeta_{Y}(u) = \zeta_{X}(u) \cdot \prod_{\chi \neq \chi_{1}} L_{Y/X}(u,\chi) $$
of Theorem \ref{product_form} gives
\begin{equation} \label{order_relation}
r_{Y} = r_{X} + \sum_{\chi \neq \chi_{1}} r(\chi). 
\end{equation}
Now, because of Theorem \ref{three_term_L}, we have
\begin{equation} \label{lower_bound}
r(\chi) \ge r_{X} - 1 
\end{equation}
for non-trivial characters $\chi \in \widehat{G}$.  If $|G|=d$, then $|E_{Y}| = d|E_{X}|$ and similarly $|V_{Y}| = d|V_{X}|$.  A simple calculation then shows that
\begin{equation} \label{count}
r_{Y} = r_{X} + (d-1)(r_{X} - 1). 
\end{equation}
If $r(\chi)> r_{X} -1$ for some non-trivial character $\chi$, then it would follow from $(\ref{order_relation})$, $(\ref{lower_bound})$, and $(\ref{count})$ that 
$$r_{Y} = r_{X} + \sum_{\chi \neq \chi_{1}} r(\chi) > r_{X} + (d-1)(r_{X} - 1)= r_{Y},$$ 
but this is a contradiction.  Thus $r(\chi) = r_{X} - 1$ for all $\chi \in \widehat{G}$ satisfying $\chi \neq \chi_{1}$.
\end{proof}
In summary, one has
\begin{equation*}
r(\chi) = r_{X}-1 \text{ if } \chi \neq \chi_{1}, \text{ and } r(\chi_{1}) = 
\begin{cases}
r_{X}, & \text{ if } X \neq C_{n};\\
2, & \text{ if } X = C_{n}.
\end{cases}
\end{equation*}
\begin{corollary} \label{special_value_one}
Let $Y/X$ be an abelian cover with automorphism group $G$ and let $\chi \in \widehat{G}$.  If $\chi \neq \chi_{1}$, then
$$L_{Y/X}^{*}(1,\chi) = (-2)^{r_{X}-1} \cdot {\rm det}(D - A_{\chi}). $$
\end{corollary}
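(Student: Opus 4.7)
The plan is to read off the leading Taylor coefficient directly from the three-term determinant formula of Theorem \ref{three_term_L}, using the fact that Proposition \ref{order_of_vanishing} already pins down the order of vanishing.

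Concretely, set $g(u) = (1-u^{2})^{r_{X}-1}$ and $h(u) = \det(I - A_{\chi}u + (D-I)u^{2})$, so that $L_{Y/X}(u,\chi)^{-1} = g(u)\,h(u)$. First I would factor
\[
g(u) = \bigl(-(u-1)(u+1)\bigr)^{r_{X}-1} = (-1)^{r_{X}-1}(u+1)^{r_{X}-1}(u-1)^{r_{X}-1},
\]
which vanishes to exact order $r_{X}-1$ at $u=1$, with leading coefficient (in powers of $u-1$) equal to $(-1)^{r_{X}-1}\cdot 2^{r_{X}-1} = (-2)^{r_{X}-1}$. Next I would evaluate $h(1) = \det(I - A_{\chi} + D - I) = \det(D - A_{\chi})$.

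The key observation is that for $\chi \neq \chi_{1}$, Proposition \ref{order_of_vanishing} forces $r(\chi) = r_{X}-1$, meaning that $L_{Y/X}(u,\chi)^{-1}$ vanishes at $u=1$ to order \emph{exactly} $r_{X}-1$. Since $g(u)$ already accounts for all of this vanishing, $h(1)$ cannot vanish; that is, $\det(D - A_{\chi}) \neq 0$. Multiplying the expansions gives
\[
L_{Y/X}(u,\chi)^{-1} = (-2)^{r_{X}-1}\det(D - A_{\chi})\,(u-1)^{r_{X}-1} + O\!\bigl((u-1)^{r_{X}}\bigr),
\]
whose leading coefficient by definition is $L_{Y/X}^{*}(1,\chi)$, yielding the claimed identity.

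There is no real obstacle here: the only thing to be careful about is the sign bookkeeping when expanding $(1-u^{2})^{r_{X}-1}$ in powers of $(u-1)$, and invoking Proposition \ref{order_of_vanishing} to ensure non-vanishing of $\det(D - A_{\chi})$ (so that what we compute really is the leading coefficient rather than $0$ times something).
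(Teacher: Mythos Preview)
Your argument is correct and is essentially the same as the paper's: both use Proposition \ref{order_of_vanishing} together with Theorem \ref{three_term_L} to conclude that $h(1)=\det(D-A_{\chi})\neq 0$, and then extract the leading coefficient from the product $g(u)h(u)$. The only cosmetic difference is that the paper phrases the final step as computing the $(r_{X}-1)$-th derivative at $u=1$, whereas you factor $(1-u^{2})^{r_{X}-1}$ directly; these amount to the same computation.
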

\begin{proof}
Indeed, Proposition \ref{order_of_vanishing} combined with Theorem \ref{three_term_L} shows that the polynomial $h(u) = {\rm det}(I - A_{\chi}u + (D-I)u^{2})$ does not vanish at $u=1$.  The result follows then from Theorem \ref{three_term_L}, for one just has to calculate the $(r_{X}-1)$-th derivative of $L_{Y/X}(u,\chi)^{-1}$ at $u=1$, which is a simple calculation left to the reader.
\end{proof}

\subsection{Relations between the number of spanning trees in abelian covers} \label{relations}
Artin-Ihara $L$-functions obey the same formalism as the usual Artin $L$-functions in number theory.  (See Proposition $18.10$ of \cite{Terras:2011}).  This allows us to show the following theorem which is analogous to Kuroda's class number formula for biquadratic extensions of number fields.  (See \cite{Lemmermeyer:1994} for instance.)

\begin{theorem}
Let $Y/X$ be an abelian cover of multigraphs with automorphism group $G \simeq \mathbb{Z}/2\mathbb{Z} \times \mathbb{Z}/2\mathbb{Z}$.  Let $X_{i}$ be the intermediate double covers of $X$ for $i=2,3,4$.  Then
$$\kappa_{Y} = 2 \cdot \frac{\kappa_{2} \kappa_{3} \kappa_{4}}{\kappa_{X}^{2}},$$
where we write $\kappa_{i}$ for $\kappa_{X_{i}}$ ($i=2,3,4$) in order to simplify the notation.
\end{theorem}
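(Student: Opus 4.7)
The plan is to use the factorization of the Ihara zeta function into Artin--Ihara $L$-functions (Theorem \ref{product_form}) together with the inflation property of these $L$-functions (the formalism of Proposition $18.10$ of \cite{Terras:2011}). Each of the three non-trivial characters $\chi_{2}, \chi_{3}, \chi_{4}$ of $G \simeq \mathbb{Z}/2\mathbb{Z} \times \mathbb{Z}/2\mathbb{Z}$ has kernel of order $2$, and these kernels correspond bijectively to the three intermediate double covers $X_{2}, X_{3}, X_{4}$. By inflation, $L_{Y/X}(u, \chi_{i}) = L_{X_{i}/X}(u, \bar{\chi}_{i})$, where $\bar{\chi}_{i}$ is the unique non-trivial character of $\mathrm{Gal}(X_{i}/X) \simeq \mathbb{Z}/2\mathbb{Z}$; and applying Theorem \ref{product_form} to the double cover $X_{i}/X$ yields $L_{X_{i}/X}(u, \bar{\chi}_{i}) = \zeta_{X_{i}}(u)/\zeta_{X}(u)$. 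Substituting these three identities into Theorem \ref{product_form} applied to $Y/X$ gives the zeta-function identity
$$\zeta_{Y}(u)\cdot\zeta_{X}(u)^{2}=\zeta_{X_{2}}(u)\cdot\zeta_{X_{3}}(u)\cdot\zeta_{X_{4}}(u).$$

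Next I would rewrite this as an identity among the polynomials $\zeta^{-1}$ and compare leading Taylor coefficients at $u=1$. Since $|V_{X_{i}}| = 2|V_{X}|$ and $|E_{X_{i}}| = 2|E_{X}|$, one has $r_{X_{i}} = 2r_{X} - 1$; similarly $r_{Y} = 4r_{X} - 3$. Because the topological fundamental group of $X$ is free of rank $r_{X}$ and must admit a surjection onto $(\mathbb{Z}/2\mathbb{Z})^{2}$ for a connected $G$-cover to exist, necessarily $r_{X} \geq 2$, so none of $X, X_{2}, X_{3}, X_{4}, Y$ is a cycle $C_{n}$ and Theorem \ref{analogue_dir} applies uniformly.

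Finally, I would substitute the formula $\zeta_{Z}^{*}(1) = (-1)^{r_{Z}+1}\cdot 2^{r_{Z}}\cdot(r_{Z}-1)\cdot \kappa_{Z}$ of Theorem \ref{analogue_dir} into both sides of the reciprocated identity. Using the parities $r_{X_{i}}+1=2r_{X}$ and $r_{Y}+1=4r_{X}-2$, all sign factors $(-1)^{r+1}$ collapse to $+1$. The three factors $r_{X_{i}}-1 = 2(r_{X}-1)$ combine with $(r_{X}-1)^{2}$ coming from $\zeta_{X}^{2}$ and $r_{Y}-1 = 4(r_{X}-1)$ coming from $\zeta_{Y}$ so that the $(r_{X}-1)$-powers cancel on both sides, while the powers of $2$ reduce to a net factor of $2$ on the spanning-tree side. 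This leaves exactly $\kappa_{X}^{2}\cdot\kappa_{Y} = 2\,\kappa_{2}\kappa_{3}\kappa_{4}$, which rearranges to the desired formula. The main obstacle is not conceptual but bookkeeping: matching orders of vanishing and leading coefficients carefully; the genuine content of the theorem is the zeta-function factorization obtained from inflation, and everything else is forced by Theorem \ref{analogue_dir}.
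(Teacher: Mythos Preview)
Your proof is correct and follows essentially the same approach as the paper: apply Theorem~\ref{product_form} to $Y/X$, use inflation to identify each $L_{Y/X}(u,\chi_{i})$ with $L_{X_{i}/X}(u,\bar{\chi}_{i}) = \zeta_{X_{i}}(u)/\zeta_{X}(u)$, and then read off the spanning-tree identity from Theorem~\ref{analogue_dir}. Your argument is in fact slightly more careful than the paper's, which simply says ``a simple calculation'' at the final step; you explicitly observe that $r_{X}\geq 2$ (so that no cycle graphs $C_{n}$ arise and Theorem~\ref{analogue_dir} applies uniformly) and you track the orders of vanishing and the powers of $2$ and $(r_{X}-1)$ explicitly.
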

\begin{proof}
We have four characters of $G$ which we label $\widehat{G}= \{\chi_{1},\chi_{2},\chi_{3},\chi_{4}\}$, so that $\chi_{1}$ is the trivial character and $\chi_{i}$, for $i=2,3,4$, is the character satisfying ${\rm ker}(\chi_{i}) = {\rm Aut}(Y/X_{i})$.  From Theorem \ref{product_form}, we have
\begin{equation} \label{un}
\zeta_{Y}^{*}(1) = \zeta_{X}^{*}(1) \cdot \prod_{i=2}^{4} L_{Y/X}^{*}(1,\chi_{i}). 
\end{equation}
Now, $\chi_{i}$ induces the unique non-trivial character $\widetilde{\chi}_{i}$ of ${\rm Aut}(X_{i}/X)$ for $i=2,3,4$.  The inflation property of Artin-Ihara $L$-functions (point $(2)$ of Theorem $18.10$ of \cite{Terras:2011}) shows that
\begin{equation} \label{deux}
L_{Y/X}(u,\chi_{i}) = L_{X_{i}/X}(u,\widetilde{\chi}_{i}), 
\end{equation}
for $i=2,3,4$.  Applying Theorem \ref{product_form} to the cover $X_{i}/X$ gives
\begin{equation} \label{trois}
L_{X_{i}/X}(u,\widetilde{\chi}_{i}) = \frac{\zeta_{X_{i}}(u)}{\zeta_{X}(u)}. 
\end{equation}
Combining (\ref{un}), (\ref{deux}), and (\ref{trois}) together, we obtain
$$\zeta_{Y}^{*}(1) = \zeta_{X}^{*}(1) \cdot \prod_{i=2}^{4}\frac{\zeta_{X_{i}}^{*}(1)}{\zeta_{X}^{*}(1)}. $$
Using Theorem \ref{analogue_dir}, a simple calculation shows the equality
$$\kappa_{Y} = 2 \cdot \frac{\kappa_{2} \kappa_{3} \kappa_{4}}{\kappa_{X}^{2}},$$
where we write $\kappa_{i}$ for $\kappa_{X_{i}}$ ($i=2,3,4$) as we wanted to show.
\end{proof}
\begin{remark}
This last theorem can be extended to abelian covers with Galois group isomorphic to an elementary abelian $2$-group $(\mathbb{Z}/2\mathbb{Z})^{m}$.  Indeed, if $Y/X$ is such a cover, and $X_{i}$ are the intermediate double covers for $i=1,\ldots,2^{m}-1$, then one can show 
$$\kappa_{Y} = \frac{2^{2^{m}-m-1}}{\kappa_{X}^{2^{m} - 2}}\prod_{i=1}^{2^{m}-1}\kappa_{i}. $$
\end{remark}
This type of relation between various numbers of spanning trees could be generalized to other abelian (and more generally Galois) covers of multigraphs.  It would be interesting to find the most general one possible perhaps along the lines of Brauer's class number relation in algebraic number theory.

\section{The equivariant special value}
Again, we assume that $Y/X$ is an abelian cover of multigraphs with Galois group $G$.  We introduce an equivariant $L$-function $\theta_{Y/X}:\mathbb{C} \longrightarrow \mathbb{C}[G]$ defined by
$$u \mapsto \theta_{Y/X}(u) = \sum_{\chi \in \widehat{G}} L_{Y/X}(u,\chi)^{-1} \cdot e_{\overline{\chi}},$$
where
$$e_{\chi} = \frac{1}{|G|} \sum_{\sigma \in G} \chi(\sigma) \cdot \sigma^{-1} $$
is the usual idempotent in $\mathbb{C}[G]$ corresponding to the character $\chi$.  We are interested in the special value
$$\theta_{Y/X}^{*}(1) = \sum_{\chi \in \widehat{G}} L_{Y/X}^{*}(1,\chi) \cdot e_{\overline{\chi}}. $$
As before, we label the vertices $V_{X} = \{v_{1},\ldots,v_{n} \}$, and recall that in \S \ref{artin_ihara_L}, we fixed a vertex $w_{i}$ of $Y$ in the fiber of $v_{i}$ for each $i=1,\ldots,n$.  Then one has
$${\rm Div}(Y) = \bigoplus_{i=1}^{n} \mathbb{Z}[G] \cdot w_{i}. $$
Since, ${\rm Ann}_{\mathbb{Z}[G]}(w_{i}) = 0$, one has $\mathbb{Z}[G] \cdot w_{i} \simeq \mathbb{Z}[G]$ as $\mathbb{Z}[G]$-modules and therefore, ${\rm Div}(Y)$ is a free $\mathbb{Z}[G]$-module of rank $n$.  We consider the function $\phi:{\rm Div}(Y) \longrightarrow {\rm Div}(Y)$ given on vertices $w_{0} \in V_{Y}$ by the formula
$$w_{0} \mapsto \phi(w_{0}) = \sum_{w \in V_{Y}} \rho_{w}(w_{0}) \cdot w,$$
where
\begin{equation*}
\rho_{w}(w_{0}) =
\begin{cases}
d_{Y}(w_{0}) - 2 \cdot \text{number of loops at $w_{0}$}, &\text{if } w = w_{0};\\
- \text{number of edges from } w \text{ to } w_{0}, &\text{if } w \neq w_{0}.
\end{cases}
\end{equation*}
Note that ${\rm Div}(Y)$ is isomorphic to $\mathcal{M}(Y)$ as $\mathbb{Z}[G]$-modules via the map $w \mapsto \chi_{w}$, which leads to the following commutative diagram
\begin{equation*} 
  \begin{CD}
     {\rm Div}(Y)       @>{\phi}>>          {\rm Div}(Y)             \\
     @V{\rotatebox{90}{$\simeq$}}VV  @V{\rotatebox{90}{$=$}}VV   \\
     \mathcal{M}(Y)    @>{{\rm div}}>>       {\rm Div}(Y)
  \end{CD}
\end{equation*}

\begin{lemma} \label{usefullem}
For all $w_{1},w_{2} \in V_{Y}$ and for all $\sigma \in G$, one has
$$\rho_{w_{1}}(\sigma \cdot w_{2}) = \rho_{\sigma^{-1} \cdot w_{1}}(w_{2}). $$
\end{lemma}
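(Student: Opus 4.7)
The plan is to unpack the definition of $\rho$ and use the fact that $G$ acts on $Y$ by multigraph automorphisms (indeed by covering transformations), so the degree function, the number of loops at a vertex, and the number of edges between any ordered pair of vertices are all $G$-invariant. Concretely, for every $\sigma \in G$ and $w \in V_Y$ one has $d_Y(\sigma \cdot w) = d_Y(w)$ and the same number of loops at $\sigma \cdot w$ as at $w$; and for every $\sigma \in G$ and $w, w' \in V_Y$, the number of edges from $w$ to $w'$ equals the number of edges from $\sigma \cdot w$ to $\sigma \cdot w'$ (via the bijection $e \mapsto \sigma \cdot e$ on $E_Y$).

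Next I would split into the two cases that appear in the piecewise definition. Note that the condition $w_1 = \sigma \cdot w_2$ is equivalent to $\sigma^{-1} \cdot w_1 = w_2$, so the case split is consistent on both sides of the claimed identity.

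In the first case, both $\rho_{w_1}(\sigma \cdot w_2)$ and $\rho_{\sigma^{-1} \cdot w_1}(w_2)$ fall under the diagonal branch of the definition, giving respectively $d_Y(\sigma \cdot w_2) - 2 \cdot \ell(\sigma \cdot w_2)$ and $d_Y(w_2) - 2 \cdot \ell(w_2)$ (where $\ell$ denotes the number of loops at a vertex), and these are equal by $G$-invariance. In the second case, both quantities reduce to the negative of an edge count: $-\#\{\text{edges from } w_1 \text{ to } \sigma \cdot w_2\}$ and $-\#\{\text{edges from } \sigma^{-1} \cdot w_1 \text{ to } w_2\}$ respectively, which are equal via the bijection $e \mapsto \sigma^{-1} \cdot e$.

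There is no real obstacle here; the lemma is essentially a bookkeeping statement expressing that the Laplacian coefficients $\rho_w(w_0)$ transform covariantly under the deck-transformation action. The only point requiring minimal care is verifying that the case split lines up correctly on both sides, which follows immediately from the fact that the $G$-action is free on the fiber structure in the obvious sense used in the argument above.
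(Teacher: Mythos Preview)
Your proof is correct and follows essentially the same approach as the paper: both rely on the fact that each $\sigma \in G$ is a multigraph automorphism and hence preserves adjacency, edge counts, degrees, and loop counts. Your version is simply a more detailed unpacking of the piecewise definition of $\rho$, making explicit the case split and the bijection $e \mapsto \sigma^{-1} \cdot e$ that the paper leaves implicit.
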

\begin{proof}
Since $\sigma$ is an isomorphism of multigraphs, it induces two bijections which we denote by the same symbol $\sigma:V_{Y} \longrightarrow V_{Y}$ and $\sigma:E_{Y} \longrightarrow E_{Y}$ satisfying $\sigma(V_{Y}(e)) = V_{Y}(\sigma(e))$ for all $e \in E_{Y}$.  Thus, one has $w_{1} \sim w_{2}$ if and only if $\sigma \cdot w_{1} \sim \sigma \cdot w_{2}$ for all $\sigma \in G$.  It follows that $\rho_{w_{1}}(\sigma \cdot w_{2}) = \rho_{\sigma^{-1} \cdot w_{1}}(w_{2})$ as we wanted to show.
\end{proof}
\begin{corollary}
The group morphism $\phi:{\rm Div}(Y) \longrightarrow {\rm Div}(Y)$ is a morphism of $\mathbb{Z}[G]$-modules.
\end{corollary}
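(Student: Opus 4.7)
The plan is to verify that $\phi$ commutes with the $G$-action on ${\rm Div}(Y)$, which, together with the fact that $\phi$ is already a group homomorphism by construction, will immediately give that $\phi$ is a $\mathbb{Z}[G]$-module morphism. Since ${\rm Div}(Y)$ is generated as a $\mathbb{Z}$-module by the vertices $w_{0} \in V_{Y}$, and the $G$-action is $\mathbb{Z}$-linear, it suffices to check the identity $\phi(\sigma \cdot w_{0}) = \sigma \cdot \phi(w_{0})$ for all $\sigma \in G$ and all $w_{0} \in V_{Y}$.

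First I would expand the left-hand side directly from the definition:
$$\phi(\sigma \cdot w_{0}) = \sum_{w \in V_{Y}} \rho_{w}(\sigma \cdot w_{0}) \cdot w.$$
At this point I would apply Lemma \ref{usefullem}, which rewrites each coefficient as $\rho_{w}(\sigma \cdot w_{0}) = \rho_{\sigma^{-1} \cdot w}(w_{0})$. This is the key input, and it is the only non-formal step in the argument; the lemma has already been established, so the rest is a reindexing.

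Next I would perform the change of summation variable $w' = \sigma^{-1} \cdot w$, using that the map $w \mapsto \sigma^{-1} \cdot w$ is a bijection of $V_{Y}$. This yields
$$\phi(\sigma \cdot w_{0}) = \sum_{w' \in V_{Y}} \rho_{w'}(w_{0}) \cdot (\sigma \cdot w') = \sigma \cdot \sum_{w' \in V_{Y}} \rho_{w'}(w_{0}) \cdot w' = \sigma \cdot \phi(w_{0}),$$
where the middle equality uses $\mathbb{Z}$-linearity of the $G$-action on ${\rm Div}(Y)$. This completes the verification.

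There is no real obstacle here; the entire content is packed into Lemma \ref{usefullem}, and the corollary is a bookkeeping consequence. The only thing one has to be careful about is getting the inverse on the correct side in the reindexing step, which is exactly why the lemma is phrased with $\sigma^{-1}$ on the subscript.
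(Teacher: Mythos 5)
Your proposal is correct and follows exactly the paper's argument: expand $\phi(\sigma\cdot w_{0})$ from the definition, apply Lemma \ref{usefullem} to rewrite the coefficients, and reindex the sum over $V_{Y}$ by the bijection $w\mapsto\sigma^{-1}\cdot w$ to pull out $\sigma$. Nothing further is needed.
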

\begin{proof}
Using Lemma \ref{usefullem}, if $\sigma \in G$, one has
\begin{equation*}
\begin{aligned}
\phi(\sigma \cdot w_{0}) &= \sum_{w \in V_{Y}} \rho_{w}(\sigma \cdot w_{0}) \cdot w \\
&= \sum_{w \in V_{Y}} \rho_{\sigma^{-1} \cdot w}(w_{0}) \cdot w \\
&= \sum_{w \in V_{Y}} \rho_{w}(w_{0})\cdot (\sigma \cdot w) \\
&= \sigma \cdot \phi(w_{0}).
\end{aligned}
\end{equation*}
\end{proof}

\begin{definition}
For $i=1,\ldots,n$, we define $\ell_{w_{i}}:{\rm Div}(Y) \longrightarrow \mathbb{Z}[G]$ via the formula
$$w \mapsto \ell_{w_{i}}(w) = \sum_{\sigma \in G} \rho_{w_{i}}(\sigma \cdot w) \cdot \sigma^{-1}. $$
\end{definition}
We will often write $\ell_{i}$ instead of $\ell_{w_{i}}$ in order to simplify the notation.  It is simple to check that the maps $\ell_{i}$ are morphisms of $\mathbb{Z}[G]$-modules.
\begin{proposition}
With the notation as above, one has
$$\phi(D) = \sum_{i=1}^{n} \ell_{i}(D) \cdot w_{i}, $$
for any $D \in {\rm Div}(Y)$.
\end{proposition}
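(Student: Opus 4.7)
The plan is to exploit $\mathbb{Z}[G]$-linearity and reduce to the case of a basis vector. Since ${\rm Div}(Y) = \bigoplus_{i=1}^{n} \mathbb{Z}[G]\cdot w_{i}$ is $\mathbb{Z}[G]$-free on $w_{1},\ldots,w_{n}$, and since both sides of the proposed identity are $\mathbb{Z}[G]$-linear in $D$ (the left by the previous corollary, the right because each $\ell_{i}$ has been noted to be a $\mathbb{Z}[G]$-module morphism), it suffices to prove the equality on each $D = w_{j}$.

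The first step is to use the transversal $w_{1},\ldots,w_{n}$ to rewrite $V_{Y}$ as the disjoint union $\bigsqcup_{i=1}^{n} G \cdot w_{i}$. Applying this to the defining formula for $\phi$ gives
\[
\phi(w_{j}) \;=\; \sum_{w \in V_{Y}} \rho_{w}(w_{j}) \cdot w \;=\; \sum_{i=1}^{n} \sum_{\sigma \in G} \rho_{\sigma \cdot w_{i}}(w_{j}) \cdot (\sigma \cdot w_{i}).
\]

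Next I will invoke Lemma \ref{usefullem} to move the $G$-action from the lower index of $\rho$ to its argument. Specifically, taking $w_{1}=w_{i}$, $w_{2}=w_{j}$, and replacing $\sigma$ by $\sigma^{-1}$ in the lemma yields $\rho_{\sigma \cdot w_{i}}(w_{j}) = \rho_{w_{i}}(\sigma^{-1} \cdot w_{j})$. Substituting this into the expression above and collecting the coefficient of $w_{i}$ gives
\[
\phi(w_{j}) \;=\; \sum_{i=1}^{n} \left( \sum_{\sigma \in G} \rho_{w_{i}}(\sigma^{-1} \cdot w_{j}) \cdot \sigma \right) \cdot w_{i}.
\]

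Finally, re-indexing the inner sum via the bijection $\sigma \mapsto \sigma^{-1}$ on $G$ transforms the parenthesized element into $\sum_{\sigma \in G} \rho_{w_{i}}(\sigma \cdot w_{j}) \cdot \sigma^{-1} = \ell_{i}(w_{j})$ by definition. Hence $\phi(w_{j}) = \sum_{i=1}^{n} \ell_{i}(w_{j}) \cdot w_{i}$, and $\mathbb{Z}[G]$-linearity completes the proof. The only place requiring care is the bookkeeping between $\sigma$ and $\sigma^{-1}$ needed to match the sign conventions in the definition of $\ell_{i}$; everything else is a direct unpacking of definitions.
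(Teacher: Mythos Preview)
Your proof is correct and follows essentially the same route as the paper: both arguments partition $V_{Y}$ into the $G$-orbits $\{\sigma\cdot w_{i}:\sigma\in G\}$, invoke Lemma~\ref{usefullem} to shuttle $\sigma$ between the indices and arguments of $\rho$, and then match the result to the definition of $\ell_{i}$. The only cosmetic differences are that the paper computes from the right-hand side toward $\phi(w)$ (checking the identity for every $w\in V_{Y}$ via $\mathbb{Z}$-linearity), whereas you compute from $\phi(w_{j})$ toward the right-hand side and reduce to just the $\mathbb{Z}[G]$-basis $w_{1},\ldots,w_{n}$; neither change affects the substance of the argument.
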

\begin{proof}
It suffices to show this equality for $D = w$, where $w \in V_{Y}$.  Using Lemma \ref{usefullem} and the fact that $G$ acts transitively on the fibers of each $v_{i}$, one calculates
\begin{equation*}
\begin{aligned}
\sum_{i=1}^{n} \ell_{i}(w) \cdot w_{i} &= \sum_{i=1}^{n} \sum_{\sigma \in G} \rho_{w_{i}}(\sigma \cdot w) \cdot (\sigma^{-1} \cdot w_{i}) \\
&= \sum_{i=1}^{n} \sum_{\sigma \in G} \rho_{\sigma^{-1} \cdot w_{i}}(w) \cdot (\sigma^{-1}\cdot w_{i}) \\
&= \phi(w).
\end{aligned}
\end{equation*}
\end{proof}

The morphism of $\mathbb{Z}[G]$-modules
$$\wedge^{n}\phi: \bigwedge^{n}_{\mathbb{Z}[G]} {\rm Div}(Y) \longrightarrow \bigwedge^{n}_{\mathbb{Z}[G]}{\rm Div}(Y), $$
has the property
$$\wedge^{n}\phi(w_{1} \wedge \ldots \wedge w_{n}) = {\rm det}_{\mathbb{Z}[G]}(\phi) \cdot w_{1} \wedge \ldots \wedge w_{n}, $$
and thus
\begin{equation*}
{\rm det}_{\mathbb{Z}[G]}(\phi) = {\rm det}_{\mathbb{Z}[G]} (\ell_{i}(w_{j})) = {\rm det}_{\mathbb{Z}[G]}\left(\sum_{\sigma \in G} \rho_{w_{i}}(\sigma \cdot w_{j}) \cdot \sigma^{-1} \right).
\end{equation*}

\begin{theorem} \label{equivariant_special_value}
With the notation as above, one has
$$(-2)^{r_{X} - 1} \cdot {\rm det}_{\mathbb{Z}[G]}(\phi) = \theta_{Y/X}^{*}(1) \cdot e, $$
where $e = 1 - e_{\chi_{1}}$.
\end{theorem}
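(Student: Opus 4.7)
The plan is to verify the identity componentwise under the ring isomorphism $\mathbb{C}[G] \xrightarrow{\sim} \prod_{\psi \in \widehat G} \mathbb{C}$ induced by the characters of $G$. Concretely, for each $\psi \in \widehat G$, extended to a $\mathbb{C}$-algebra map $\mathbb{C}[G] \to \mathbb{C}$ via $\sigma \mapsto \psi(\sigma)$, I will show that both sides of the claimed identity map to the same complex number under $\psi$. Since both sides lie in $\mathbb{Z}[G] \subseteq \mathbb{C}[G]$, matching under every $\psi$ forces equality.

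For the right-hand side the orthogonality relations yield $\psi(e_{\overline{\chi}}) = \delta_{\psi, \overline{\chi}}$, so $\psi(\theta_{Y/X}^{*}(1)) = L_{Y/X}^{*}(1, \overline{\psi})$, while $\psi(e) = 1 - \delta_{\psi, \chi_{1}}$. Thus the $\psi$-component of $\theta_{Y/X}^{*}(1) \cdot e$ equals $0$ if $\psi = \chi_{1}$ and $L_{Y/X}^{*}(1, \overline{\psi})$ otherwise.

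For the left-hand side I apply $\psi$ entrywise to the matrix $(\ell_{i}(w_{j}))$ and use multiplicativity of the determinant. Since $\psi(\sigma^{-1}) = \overline{\psi}(\sigma)$, the $(i,j)$-entry becomes
\[ M_{\psi,ij} = \sum_{\sigma \in G} \rho_{w_{i}}(\sigma \cdot w_{j})\, \overline{\psi}(\sigma). \]
The central computation is the matrix identity $M_{\psi} = D - A_{\overline{\psi}}$. Off the diagonal this is immediate from the definitions, since for $i \neq j$ one always has $w_{i} \neq \sigma \cdot w_{j}$ (as $G$ acts freely on fibers), so both sides reduce to $-\sum_{\sigma} \overline{\psi}(\sigma)\cdot(\text{edges from } w_{i} \text{ to } \sigma \cdot w_{j})$. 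On the diagonal one must unpack the cases $\sigma = 1$ and $\sigma \neq 1$ in the definitions of $\rho$ and $A(\sigma)$ and use that $d_{Y}(w_{i}) = d_{X}(v_{i})$ together with the combinatorial fact that the total number of edge-ends at $w_{i}$ lying over loops at $v_{i}$ equals $2 \cdot (\text{loops at } v_{i})$; the contributions then reassemble into $D_{ii} - (A_{\overline{\psi}})_{ii}$.

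With $M_{\psi} = D - A_{\overline{\psi}}$ in hand, the proof concludes in two cases. If $\psi = \chi_{1}$, then $A_{\overline{\psi}} = A$ and $M_{\chi_{1}} = Q$ is the Laplacian of $X$; since $X$ is connected, $\det Q = 0$, so both sides vanish under $\chi_{1}$. If $\psi \neq \chi_{1}$, then $\overline{\psi}$ is non-trivial and Corollary \ref{special_value_one} gives $(-2)^{r_{X}-1} \det(D - A_{\overline{\psi}}) = L_{Y/X}^{*}(1, \overline{\psi})$, which agrees with the $\psi$-component of the right-hand side. The main obstacle is thus the bookkeeping in the diagonal entries of the identity $M_{\psi} = D - A_{\overline{\psi}}$; everything else follows formally from Corollary \ref{special_value_one} and character orthogonality.
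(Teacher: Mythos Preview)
Your proof is correct and follows essentially the same route as the paper's: both arguments check the identity character by character, reduce to the matrix equality $\bigl(\sum_{\sigma}\overline{\psi}(\sigma)\,\rho_{w_i}(\sigma\cdot w_j)\bigr)=D-A_{\overline{\psi}}$, and then invoke Corollary~\ref{special_value_one} for nontrivial $\psi$ and $\det Q=0$ for $\psi=\chi_1$. Your write-up is a bit more explicit about the diagonal bookkeeping (using $d_Y(w_i)=d_X(v_i)$), whereas the paper records the same computation tersely via the cases $\sigma=1$ and $\sigma\neq 1$; one small quibble is that for $i\neq j$ the reason $w_i\neq\sigma\cdot w_j$ is that $G$ \emph{preserves} fibers, not that it acts freely on them.
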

\begin{proof}
Let $\chi \in \widehat{G}$. One has
\begin{equation*}
\overline{\chi(\sigma)} \cdot (\rho_{w_{i}}(\sigma \cdot w_{j})) =
\begin{cases}
- \overline{\chi(\sigma)} \cdot A(\sigma), &\text{ if } \sigma \neq 1;\\
D - A(1), &\text{ if } \sigma =1.
\end{cases}
\end{equation*}
We then have the following equality of matrices
$$\left(\sum_{\sigma \in G} \overline{\chi(\sigma)} \rho_{w_{i}}(\sigma \cdot w_{j}) \right) = D - A_{\overline{\chi}}. $$
It follows that $\chi \left( (-2)^{r_{X} - 1} \cdot {\rm det}_{\mathbb{Z}[G]}(\phi)\right) = (-2)^{r_{X}-1} \cdot {\rm det}(D - A_{\overline{\chi}})$.  But by Corollary \ref{special_value_one}, we have
\begin{equation*}
\chi \left( (-2)^{r_{X} - 1} \cdot {\rm det}_{\mathbb{Z}[G]}(\phi)\right) =
\begin{cases}
L_{Y/X}^{*}(1,\overline{\chi}), &\text{ if } \chi \neq \chi_{1}; \\
0, &\text{ if } \chi = \chi_{1}.
\end{cases}
\end{equation*}
This ends the proof of the theorem.
\end{proof}
\begin{remark}
We can now explain what would happen if we had chosen other vertices $w_{i}'$ in the fiber of $v_{i}$ for $i=1,\ldots,n$.  The automorphism group $G$ acts transitively on the fibers, and thus there exist $\tau_{i} \in G$ such that $\tau_{i} \cdot w_{i} = w_{i}'$.  If we let $P$ be the diagonal matrix whose elements on the diagonal are $\tau_{1},\ldots,\tau_{n}$, then $P \in {\rm Gl}(n,\mathbb{Z}[G])$.  Furthermore, one can check that $P^{-1} \cdot (\ell_{w_{i}}(w_{j})) \cdot P = (\ell_{w_{i}'}(w_{j}'))$ and
$$I - A_{\chi}'u + (D-I)u^{2} = \chi(P)(I-A_{\chi}u + (D-I)u^{2})\chi(P^{-1}), $$
where $A_{\chi}'$ is the matrix obtained by using the vertices $w_{i}'$.  It follows from Theorem \ref{three_term_L} that the $L$-functions, and hence also the special values $L_{Y/X}^{*}(1,\chi)$, do not depend on the choice of the vertices $w_{i}$.
\end{remark}

\subsection{An annihilation statement}
Note that it follows from Theorem \ref{equivariant_special_value} that $\theta_{Y/X}^{*}(1) \cdot e \in \mathbb{Z}[G]$.  We can now prove the following analogue of a classical conjecture of Brumer on annihilation of class groups.
\begin{theorem}
Let $Y/X$ be an abelian cover of multigraphs with automorphism group $G$.  Then, we have
$$\theta_{Y/X}^{*}(1) \cdot e \in {\rm Ann}_{\mathbb{Z}[G]}({\rm Jac}(Y)), $$
where again $e = 1 - e_{\chi_{1}}$.
\end{theorem}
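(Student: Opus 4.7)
My plan is to combine Theorem \ref{equivariant_special_value} with the classical fact that the determinant of an endomorphism of a finitely generated free module annihilates its cokernel, applied to the $\mathbb{Z}[G]$-endomorphism $\phi$ of ${\rm Div}(Y)$ constructed in the previous subsection.

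First, using Theorem \ref{equivariant_special_value}, I would rewrite
$$\theta_{Y/X}^{*}(1) \cdot e = (-2)^{r_{X} - 1} \cdot {\rm det}_{\mathbb{Z}[G]}(\phi)$$
as an identity in $\mathbb{Z}[G]$, which expresses the element we wish to show annihilates as a $\mathbb{Z}[G]$-multiple of ${\rm det}_{\mathbb{Z}[G]}(\phi)$. It therefore suffices to prove that ${\rm det}_{\mathbb{Z}[G]}(\phi) \in {\rm Ann}_{\mathbb{Z}[G]}({\rm Jac}(Y))$.

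Next, since ${\rm Div}(Y) = \bigoplus_{i=1}^{n} \mathbb{Z}[G] \cdot w_{i}$ is free of rank $n$ over $\mathbb{Z}[G]$ and $\phi$ is $\mathbb{Z}[G]$-linear, the adjugate identity $\phi \cdot {\rm adj}(\phi) = {\rm det}_{\mathbb{Z}[G]}(\phi) \cdot {\rm id}$ holds in ${\rm End}_{\mathbb{Z}[G]}({\rm Div}(Y))$, and hence ${\rm det}_{\mathbb{Z}[G]}(\phi)$ annihilates the cokernel ${\rm Div}(Y)/\phi({\rm Div}(Y))$. Through the $\mathbb{Z}[G]$-isomorphism ${\rm Div}(Y) \simeq \mathcal{M}(Y)$ given by $w \mapsto \chi_{w}$ (the commutative diagram preceding Lemma \ref{usefullem}), one has $\phi({\rm Div}(Y)) = {\rm div}(\mathcal{M}(Y)) = {\rm Pr}(Y)$, so this cokernel is precisely ${\rm Pic}(Y) = {\rm Div}(Y)/{\rm Pr}(Y)$. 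Finally, since the $G$-action permutes vertices and hence preserves the degree, ${\rm Div}^{\circ}(Y)$ is a $\mathbb{Z}[G]$-submodule of ${\rm Div}(Y)$, and so ${\rm Jac}(Y) = {\rm Div}^{\circ}(Y)/{\rm Pr}(Y)$ injects as a $\mathbb{Z}[G]$-submodule of ${\rm Pic}(Y)$. Any annihilator of ${\rm Pic}(Y)$ annihilates this submodule a fortiori, and the result follows.

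There is no real obstacle in this argument. The only substantive point is the identification of ${\rm Div}(Y)/\phi({\rm Div}(Y))$ with ${\rm Pic}(Y)$, which is immediate from the commutative diagram that was set up for this exact purpose; after that, the annihilation is a direct instance of the Fitting-ideal principle.
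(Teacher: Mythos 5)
Your proposal is correct and follows essentially the same route as the paper's own proof: both rewrite $\theta_{Y/X}^{*}(1)\cdot e$ as $(-2)^{r_{X}-1}\cdot{\rm det}_{\mathbb{Z}[G]}(\phi)$ via Theorem \ref{equivariant_special_value} and then use the adjugate identity $\phi\circ\phi^{\rm adj}={\rm det}_{\mathbb{Z}[G]}(\phi)\cdot{\rm id}$ on the free $\mathbb{Z}[G]$-module ${\rm Div}(Y)$ to conclude that this element sends every divisor into ${\rm Pr}(Y)$, i.e. it annihilates ${\rm Pic}(Y)$ and a fortiori ${\rm Jac}(Y)$.
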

\begin{proof}
In fact, we shall show that $\theta_{Y/X}^{*}(1) \cdot e \in {\rm Ann}_{\mathbb{Z}[G]}({\rm Pic}(Y))$.  Indeed, let $D \in {\rm Div}(Y)$.  Then one has $\phi \circ \phi^{{\rm adj}} = {\rm det}_{\mathbb{Z}[G]}(\phi) \cdot {\rm id}_{{\rm Div}(Y)}$, where $\phi^{{\rm adj}}$ denotes the adjoint (or adjugate) of $\phi$.  Therefore, by Theorem \ref{equivariant_special_value}, we have
\begin{equation*}
\begin{aligned}
\theta_{Y/X}^{*}(1) \cdot e \cdot D &= (-2)^{r_{X}-1} \cdot {\rm det}_{\mathbb{Z}[G]}(\phi) \cdot D \\
&= (-2)^{r_{X}-1} \cdot \phi \circ \phi^{{\rm adj}}(D) \\
&= \phi \left((-2)^{r_{X}-1} \cdot \phi^{{\rm adj}}(D) \right) \in {\rm Pr}(Y),
\end{aligned}
\end{equation*}
and this is precisely what we wanted to show.
\end{proof}
We remark that we have actually showed the inclusion
$$\theta_{Y/X}^{*}(1) \cdot e \in {\rm Ann}_{\mathbb{Z}[G]}({\rm Pic}(Y)). $$
This phenomenon also happens in the function field case if the cardinality of the auxiliary set of primes $S$ satisfies $|S| > 1$.  See for instance the statement of the Brumer-Stark conjecture on page $267$ of \cite{Rosen:2002}.

\subsection{The index of the ideal generated by the special value}
We have a natural $\mathbb{Z}[G]$-module morphism $s:\mathbb{Z}[G] \longrightarrow \mathbb{Z}$, defined by $\sigma \mapsto s(\sigma) = 1$, where $\sigma \in G$, and where $G$ acts trivially on $\mathbb{Z}$.  The kernel of this morphism is the augmentation ideal and is denoted by $I_{G}$.  We then have a short exact sequence of $\mathbb{Z}[G]$-modules
$$0 \longrightarrow I_{G} \longrightarrow \mathbb{Z}[G] \stackrel{s}{\longrightarrow} \mathbb{Z} \longrightarrow 0. $$
Note that $\theta_{Y/X}^{*}(1) \cdot e \in I_{G}$.
\begin{theorem} \label{index}
Let $Y/X$ be an abelian cover of degree $d$ with automorphism group $G$ and let $e = 1 - e_{\chi_{1}}$.  We have
$$|I_{G}/\theta_{Y/X}^{*}(1)\cdot e \cdot \mathbb{Z}[G]| =  2^{(d-1)(r_{X} - 1)} \cdot \frac{\kappa_{Y}}{\kappa_{X}}.$$
\end{theorem}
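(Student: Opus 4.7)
Write $R = \mathbb{Z}[G]$, $\alpha = \theta_{Y/X}^{*}(1)\cdot e$, and $N_{G} = \sum_{\sigma\in G}\sigma$. The plan is to compute the index $|I_{G}/\alpha R|$ by realizing $I_{G}$ and $R/\mathbb{Z} N_{G}$ as full rank $d-1$ lattices in the $\mathbb{Q}$-vector space $e\mathbb{Q}[G] = I_{G}\otimes_{\mathbb{Z}}\mathbb{Q}$, establishing the chain of sublattices
\[
\alpha R \;\subseteq\; I_{G} \;\subseteq\; R/\mathbb{Z} N_{G},
\]
and combining the auxiliary indices $[R/\mathbb{Z} N_{G}:I_{G}]$ and $[R/\mathbb{Z} N_{G}:\alpha R]$ by multiplicativity. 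By the discussion preceding the theorem $\alpha\in I_{G}$, and since $\chi(\alpha) = L_{Y/X}^{*}(1,\overline{\chi})$ is nonzero for every $\chi\neq\chi_{1}$ (Proposition \ref{order_of_vanishing}) while $\chi_{1}(\alpha)=0$, the map $\cdot\alpha\colon R\to R$ has kernel exactly $\mathbb{Z} N_{G}$, inducing an isomorphism $R/\mathbb{Z} N_{G}\xrightarrow{\sim}\alpha R$; moreover $I_{G}\cap\mathbb{Z} N_{G}=0$, so $I_{G}\hookrightarrow R/\mathbb{Z} N_{G}$.

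The first auxiliary index I would obtain by a direct Smith-normal-form calculation. Using the identification $R/\mathbb{Z} N_{G}\xrightarrow{\sim}\mathbb{Z}^{d-1}$ given by $(a_{1},\ldots,a_{d})\mapsto(a_{2}-a_{1},\ldots,a_{d}-a_{1})$, the images of the basis $\{\sigma_{i}-1\}_{i\ge 2}$ of $I_{G}$ form the columns of $I_{d-1}+J_{d-1}$, where $J_{d-1}$ is the all-ones matrix. Its determinant equals $d$ (eigenvalue $d$ once, $1$ with multiplicity $d-2$), so $[R/\mathbb{Z} N_{G}:I_{G}]=d$.

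The second auxiliary index follows from the standard determinant-index principle: since multiplication by $\alpha$ is a $\mathbb{Q}$-linear automorphism of $e\mathbb{Q}[G]$ preserving the lattice $R/\mathbb{Z} N_{G}$, the index equals the absolute value of its $\mathbb{Q}$-linear determinant. Decomposing $e\mathbb{Q}[G]\cong\prod_{[\chi]}\mathbb{Q}(\chi)$ over Galois orbits of non-trivial characters and taking norms factor by factor gives
\[
\det_{\mathbb{Q}}(\cdot\alpha|_{e\mathbb{Q}[G]}) \;=\; \prod_{\chi\neq\chi_{1}}\chi(\alpha) \;=\; \prod_{\chi\neq\chi_{1}} L_{Y/X}^{*}(1,\overline{\chi}),
\]
so $[R/\mathbb{Z} N_{G}:\alpha R]=\prod_{\chi\neq\chi_{1}}|L_{Y/X}^{*}(1,\chi)|$ (using that complex conjugation permutes $\widehat{G}\setminus\{\chi_{1}\}$).

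Finally, Theorem \ref{product_form} at $u=1$ gives $\zeta_{Y}^{*}(1)=\zeta_{X}^{*}(1)\prod_{\chi\neq\chi_{1}}L_{Y/X}^{*}(1,\chi)$, and substituting the formulas from Theorem \ref{analogue_dir} (together with the identities $r_{Y}-r_{X}=(d-1)(r_{X}-1)$ and $r_{Y}-1=d(r_{X}-1)$) produces
\[
\prod_{\chi\neq\chi_{1}}|L_{Y/X}^{*}(1,\chi)| \;=\; 2^{(d-1)(r_{X}-1)}\cdot d\cdot\frac{\kappa_{Y}}{\kappa_{X}}.
\]
Dividing by the first auxiliary index $d$ yields the claimed formula. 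The main technical obstacle is a clean justification of the determinant-index step, in particular the identification of $R/\mathbb{Z} N_{G}$ as a full rank lattice in $e\mathbb{Q}[G]$ together with the explicit computation $[R/\mathbb{Z} N_{G}:I_{G}]=d$. A secondary subtlety is the exceptional case $X=C_{n}$ (where $r_{X}=1$), which requires Remark \ref{cycle} in place of Theorem \ref{analogue_dir}, though the resulting identity remains valid and gives the same final answer.
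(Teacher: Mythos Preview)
Your proof is correct and follows essentially the same strategy as the paper: identify $\ker(\cdot\alpha)=\mathbb{Z} N_{G}$, compute the determinant of multiplication by $\alpha$ on the rank $d-1$ lattice as $\prod_{\chi\neq\chi_{1}}L_{Y/X}^{*}(1,\chi)$, evaluate this product via Theorems \ref{product_form} and \ref{analogue_dir} (with Remark \ref{cycle} for the $C_{n}$ case), and account for a stray factor of $d$. The only cosmetic difference is in how the factor $d$ is extracted: the paper applies the snake lemma to the diagram built from $0\to I_{G}\to\mathbb{Z}[G]\to\mathbb{Z}\to 0$ to compare $T(I_{G})$ with $T(\mathbb{Z}[G])$, whereas you compute $[R/\mathbb{Z} N_{G}:I_{G}]=d$ directly by a Smith-normal-form argument; both routes are short and standard.
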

\begin{proof}
Consider the $\mathbb{Z}[G]$-module morphism $T:\mathbb{Z}[G] \longrightarrow \mathbb{Z}[G]$ given by multiplication by the element $\theta_{Y/X}^{*}(1) \cdot e$.  Since $\theta_{Y/X}^{*}(1) \cdot e \in (\mathbb{Q}[G]\cdot e)^{\times}$, we have ${\rm ker}(T_{\mathbb{Q}}) = \mathbb{Q} \cdot e_{\chi_{1}}$.  Therefore ${\rm ker}(T) = {\rm ker}(T_{\mathbb{Q}}) \cap \mathbb{Z}[G] = \mathbb{Z} \cdot N_{G}$.  The map $T$ leads to the following commutative diagram whose rows are exact:
\begin{equation*} 
	\begin{CD}
		 0    @>>>   I_{G}      @>>>          \mathbb{Z}[G]       @>{s}>> \mathbb{Z}  @>>>0 \\
		 & &  @V{T}VV   @V{T}VV         @VVV    &  \\
		 0    @>>>   I_{G}      @>>>          \mathbb{Z}[G]      @>{s}>> \mathbb{Z} @>>>0,
	\end{CD}
\end{equation*}
The leftmost vertical arrow is injective whereas the rightmost vertical arrow is the trivial map sending everything to zero.  Thus, the snake lemma gives the exact sequence
$$0 \longrightarrow \mathbb{Z} \cdot N_{G} \stackrel{s}{\longrightarrow} \mathbb{Z} \longrightarrow I_{G}/T(I_{G}) \longrightarrow \mathbb{Z}[G]/T(\mathbb{Z}[G]) \stackrel{s}{\longrightarrow} \mathbb{Z} \longrightarrow 0, $$
from which we obtain the short exact sequence
$$0 \longrightarrow \mathbb{Z}/d\mathbb{Z} \longrightarrow I_{G}/T(I_{G}) \longrightarrow I_{G}/T(\mathbb{Z}[G]) \longrightarrow 0.$$
It follows that
\begin{equation} \label{unn}
|I_{G}/T(\mathbb{Z}[G])| = \frac{|I_{G}/T(I_{G})|}{d}. 
\end{equation}
Since $T:I_{G} \longrightarrow I_{G}$ is injective, we have $|I_{G}/T(I_{G})| = |{\rm det}_{\mathbb{Z}}(T)|$, but
\begin{equation*}
{\rm det}_{\mathbb{Z}}(T) = {\rm det}_{\mathbb{C}}(T_{\mathbb{C}}) = \prod_{\chi \neq \chi_{1}}L_{Y/X}^{*}(1,\chi) = \frac{\zeta_{Y}^{*}(1)}{\zeta_{X}^{*}(1)},
\end{equation*}
where this last equality is true by Theorem \ref{product_form}.  Using Theorem \ref{analogue_dir}, we obtain
\begin{equation} \label{deuxx}
|I_{G}/T(I_{G})| =  2^{(d-1)(r_{X} - 1)} \cdot d \cdot \frac{\kappa_{Y}}{\kappa_{X}},
\end{equation}
if $X \neq C_{n}$.  If $X = C_{n}$, then $Y = C_{dn}$ necessarily and using Remark \ref{cycle}, one gets instead
$$|I_{G}/T(I_{G})| =  \frac{2(dn)^{2}}{2n^{2}} = d^{2} = d \cdot \frac{\kappa_{Y}}{\kappa_{X}},$$
since $\kappa_{C_{m}}=m$.  So $(\ref{deuxx})$ is also true when $r_{X} = 1$.  Putting (\ref{unn}) and (\ref{deuxx}) together gives the desired result.
\end{proof}

This last theorem implies in particular that $2^{(d-1)(r_{X} - 1)} \cdot \frac{\kappa_{Y}}{\kappa_{X}} \in \mathbb{Z}$, but we shall now show that $\kappa_{X} \, | \, \kappa_{Y}$.  Let us introduce two maps ${\rm res}: {\rm Div}(Y) \longrightarrow {\rm Div}(X)$ defined via $w \mapsto {\rm res}(w) = v$, where $\pi(w) = v$ and ${\rm cor}: {\rm Div}(X) \longrightarrow {\rm Div}(Y)$ defined via
$$v \mapsto {\rm cor}(v) = \sum_{\substack{w \in V_{Y} \\ \pi(w)=v}} w. $$
Both ${\rm res}$ and ${\rm cor}$ are $\mathbb{Z}[G]$-module morphisms satisfying
$${\rm res} \circ {\rm cor} = |G| \cdot {\rm id}_{{\rm Div}(X)} \text{ and } {\rm cor} \circ {\rm res} = N_{G} \cdot {\rm id}_{{\rm Div}(Y)}.$$
Note that ${\rm res}$ is surjective and ${\rm cor}$ is injective.
\begin{theorem} \label{commutativity}
The following two commutative diagrams commute:
\begin{equation*} 
	\begin{CD}
		 {\rm Div}(Y)    @>{\phi_{Y}}>>   {\rm Div}(Y) \\
		 @VV{{\rm res}}V       @VV{{\rm res}}V    &  \\
		 {\rm Div}(X)    @>{\phi_{X}}>>   {\rm Div}(X)
	\end{CD}
\, \, \text{ and } \, \, 
        \begin{CD}
		 {\rm Div}(X)    @>{\phi_{X}}>>   {\rm Div}(X) \\
		 @VV{{\rm cor}}V       @VV{{\rm cor}}V    &  \\
		 {\rm Div}(Y)    @>{\phi_{Y}}>>   {\rm Div}(Y)
	\end{CD}
\end{equation*}
\end{theorem}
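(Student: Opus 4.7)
The plan is to reduce both diagrams to a single matrix identity and then derive that identity from the local structure of a covering map.

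As noted after the definition of $\mathrm{div}$ in Section 2, the map $\phi$ on $\mathrm{Div}(\cdot)$ is given, with respect to the basis of vertices, by left multiplication by the Laplacian matrix $Q$ of the corresponding multigraph. Let $R$ denote the matrix of $\mathrm{res}\colon\mathrm{Div}(Y)\to\mathrm{Div}(X)$: its $(v,w)$-entry equals $1$ if $\pi(w)=v$ and $0$ otherwise. Since $\mathrm{cor}(v)=\sum_{w\in\pi^{-1}(v)}w$, the matrix of $\mathrm{cor}$ is $R^{T}$. Commutativity of the first diagram is then equivalent to $Q_{X}R=RQ_{Y}$, and commutativity of the second to $Q_{Y}R^{T}=R^{T}Q_{X}$. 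Since $Q_{X}$ and $Q_{Y}$ are symmetric, the second equation is just the transpose of the first, so it suffices to prove $Q_{X}R=RQ_{Y}$.

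Fix $w\in V_{Y}$ and set $v=\pi(w)$. Comparing $(v',w)$-entries on both sides reduces the identity to showing $\rho_{v'}(v)=\sum_{w'\in\pi^{-1}(v')}\rho_{w'}(w)$ for every $v'\in V_{X}$. The key input is the defining local property of a covering of one-dimensional CW-complexes: $\pi$ induces a bijection between oriented edges of $Y$ incident to $w$ and oriented edges of $X$ incident to $v$. I would handle two cases. When $v'\neq v$, each edge in $X$ joining $v$ to $v'$ lifts uniquely at $w$ to an edge in $Y$ from $w$ to a (unique) vertex in $\pi^{-1}(v')$, so both sides equal minus the number of edges between $v$ and $v'$. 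When $v'=v$, the bijection yields $d_{Y}(w)=d_{X}(v)$, and I would analyze how loops at $v$ lift: each loop at $v$ lifts at $w$ either to a single loop at $w$ (whose two orientations contribute $2$ to $2\cdot(\text{loops at }w)$) or to two distinct non-loop edges joining $w$ to vertices in $\pi^{-1}(v)\setminus\{w\}$ (contributing $2$ to $\sum_{w'\neq w}(\text{edges from }w'\text{ to }w)$). In either subcase each loop at $v$ contributes exactly $2$ to the right-hand side, matching the $-2\cdot(\text{loops at }v)$ term on the left.

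The main obstacle is precisely this diagonal case $v'=v$: a loop at $v$ may lift either as a loop at $w$ or as a non-loop edge inside the fiber over $v$, and one must keep careful track of orientations to see that either configuration contributes the correct total. Away from this, the argument is routine bookkeeping using the oriented-edge bijection supplied by the covering map.
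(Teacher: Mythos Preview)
Your proof is correct and follows essentially the same route as the paper: both reduce the commutativity to the single identity $\rho_{v'}(v)=\sum_{w'\in\pi^{-1}(v')}\rho_{w'}(w)$ for $v=\pi(w)$, which is exactly the equality the paper records and then leaves to the reader. Your write-up goes a bit further in two respects---you use the symmetry of the Laplacians to observe that the second diagram is the transpose of the first (so only one identity needs checking), and you actually carry out the case analysis for that identity, including the delicate diagonal case where loops at $v$ may lift either to loops at $w$ or to pairs of non-loop edges within the fiber---but these are elaborations of the same argument rather than a different approach.
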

\begin{proof}
The commutativity of both diagrams follow from the following equality
$$\rho_{v}(v_{0}) = \sum_{w \in \pi^{-1}(v)} \rho_{w}(w_{0}), $$
where $w_{0}$ is any place in the fiber of $v_{0}$.  The details are left to the reader.
\end{proof}

The restriction map ${\rm res}:{\rm Div}(Y) \longrightarrow {\rm Div}(X)$ induces the commutative diagram
\begin{equation*} 
	\begin{CD}
		 0    @>>>   {\rm Div}^{\circ}(Y)     @>>>          {\rm Div}(Y)       @>{{\rm deg}}>> \mathbb{Z}  @>>>0 \\
		 & &  @V{${\rm res}$}VV   @V{${\rm res}$}VV         @V{\rotatebox{90}{$\simeq$}}V{${\rm id}$}V    &  \\
		 0    @>>>    {\rm Div}^{\circ}(X)      @>>>          {\rm Div}(X)      @>{{\rm deg}}>> \mathbb{Z} @>>>0,
	\end{CD}
\end{equation*}
and the snake lemma implies that the $\mathbb{Z}[G]$-module morphism ${\rm res}:{\rm Div}^{\circ}(Y)\longrightarrow {\rm Div}^{\circ}(X)$ is also surjective, since the middle map is surjective.  Combining this with Theorem \ref{commutativity}, we obtain that the morphism of $\mathbb{Z}[G]$-modules ${\rm res}$ induces a surjective morphism of $\mathbb{Z}[G]$-modules ${\rm res}:{\rm Jac}(Y) \longrightarrow {\rm Jac}(X)$.  We then have:
\begin{corollary} \label{divisibility}
If $Y/X$ is an abelian cover of multigraphs, then $\kappa_{X} \, | \, \kappa_{Y}$.
\end{corollary}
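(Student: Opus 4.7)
The plan is straightforward because essentially all of the substantive work has already been packaged into the paragraph directly preceding the corollary; my job is just to extract the divisibility statement from the surjection constructed there. First, I invoke the theorem from Section 2 stating $|{\rm Jac}(X)| = \kappa_X$ and $|{\rm Jac}(Y)| = \kappa_Y$, so that in particular both Jacobians are finite abelian groups of the relevant orders. Next, I use the surjective $\mathbb{Z}[G]$-module morphism $\mathrm{res}\colon {\rm Jac}(Y) \twoheadrightarrow {\rm Jac}(X)$ that was just produced by combining Theorem \ref{commutativity} (which guarantees that $\mathrm{res}$ carries ${\rm Pr}(Y)$ into ${\rm Pr}(X)$, so that $\mathrm{res}$ descends to the Jacobians) with the snake lemma applied to the two degree exact sequences (which lifts surjectivity of $\mathrm{res}$ on ${\rm Div}$ to surjectivity on ${\rm Div}^{\circ}$). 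Finally, the first isomorphism theorem yields ${\rm Jac}(Y)/\ker(\mathrm{res}) \simeq {\rm Jac}(X)$, whence $\kappa_Y = \kappa_X \cdot |\ker(\mathrm{res})|$, which is exactly the divisibility claim.

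There is no genuine obstacle remaining. The only nontrivial input is that $\mathrm{res}$ really does descend to a well-defined surjection on Jacobians, and that has already been proved above the corollary. I therefore expect the proof in the paper to be a one- or two-sentence appeal to the surjectivity of $\mathrm{res}\colon {\rm Jac}(Y) \to {\rm Jac}(X)$ together with $|{\rm Jac}(Y)| = \kappa_Y$ and $|{\rm Jac}(X)| = \kappa_X$.
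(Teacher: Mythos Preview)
Your proposal is correct and matches the paper's approach exactly: the paper establishes the surjection ${\rm res}\colon {\rm Jac}(Y)\twoheadrightarrow {\rm Jac}(X)$ in the paragraph immediately preceding the corollary and then states the corollary without further argument, leaving the divisibility $\kappa_X\mid\kappa_Y$ as an immediate consequence via $|{\rm Jac}(Y)|=\kappa_Y$, $|{\rm Jac}(X)|=\kappa_X$, and the first isomorphism theorem, just as you describe.
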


Similarly, the map ${\rm cor}$ induces a morphism of $\mathbb{Z}[G]$-modules ${\rm cor}:{\rm Jac(X)} \longrightarrow {\rm Jac}(Y)$ which we will now show to be injective.  Indeed, the corestriction map ${\rm cor}:{\rm Div}(X) \longrightarrow {\rm Div}(Y)$ satisfies ${\rm cor}({\rm Div}(X)) \subseteq {\rm Div}(Y)^{G}$.  Therefore, it induces the commutative diagram
\begin{equation} \label{on_way_norm}
	\begin{CD}
		 0    @>>>   {\rm Pr}(X)     @>>>          {\rm Div}^{\circ}(X)       @>>> {\rm Jac}(X)  @>>>0 \\
		 & &  @V{${\rm cor}$}VV   @V{${\rm cor}$}V{\rotatebox{90}{$\simeq$}}V         @V{${\rm cor}$}VV    &  \\
		 0    @>>>    {\rm Pr}(Y)^{G}      @>>>          {\rm Div}^{\circ}(Y)^{G}      @>>> {\rm Jac}(Y)^{G}
	\end{CD}
\end{equation}
whose rows are exact and whose middle vertical map is an isomorphism.  Embedding $\mathbb{Z}$ diagonally into ${\rm Div}(Y)$ leads to the short exact sequence of $\mathbb{Z}[G]$-modules
$$0 \longrightarrow \mathbb{Z} \longrightarrow {\rm Div}(Y) \stackrel{\phi_{Y}}{\longrightarrow} Pr(Y) \longrightarrow 0, $$
where $G$ acts trivially on $\mathbb{Z}$.  We then get a long exact sequence in cohomology that starts as follows:
$$0 \longrightarrow \mathbb{Z} \longrightarrow {\rm Div}(Y)^{G} \stackrel{\phi_{Y}}{\longrightarrow} {\rm Pr}(Y)^{G} \longrightarrow H^{1}(G,\mathbb{Z})\longrightarrow \ldots$$
But, since $G$ acts trivially on $\mathbb{Z}$, one has $H^{1}(G,\mathbb{Z}) = {\rm Hom}_{\mathbb{Z}}(G,\mathbb{Z}) = 0$.  It follows that $\phi_{Y}({\rm Div}(Y)^{G}) = {\rm Pr}(Y)^{G}$.  Since ${\rm Div}(Y)^{G} = {\rm cor}({\rm Div}(X))$, Theorem \ref{commutativity} implies that ${\rm Pr}(Y)^{G} = {\rm cor}({\rm Pr}(X))$.  Applying the snake lemma to (\ref{on_way_norm}) shows that the morphism of $\mathbb{Z}[G]$-modules ${\rm cor}:{\rm Jac}(X) \longrightarrow {\rm Jac}(Y)$ is injective as we wanted to show.  As a result, we can re-express Theorem \ref{index} as follows.
\begin{theorem} 
Let $Y/X$ be an abelian cover of multigraphs with automorphism group $G$.  Let us define the subgroup ${\rm Jac}^{0}(Y) = \{[D] \, | \, N_{G} \cdot [D] = 0 \}$.  Then
$$|{\rm Jac}^{0}(Y)| = \frac{\kappa_{Y}}{\kappa_{X}} \text{ and thus } |I_{G}/\theta_{Y/X}^{*}(1)\cdot e \cdot \mathbb{Z}[G]| = 2^{(d-1)(r_{X} - 1)} \cdot |{\rm Jac}^{0}(Y)|.$$
\end{theorem}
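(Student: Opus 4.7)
The plan is to identify $\mathrm{Jac}^0(Y)$ with the kernel of the restriction map $\mathrm{res}: \mathrm{Jac}(Y) \longrightarrow \mathrm{Jac}(X)$, and then read off its order from the already-established injectivity of $\mathrm{cor}$ and surjectivity of $\mathrm{res}$ on Jacobians.

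First I would use the identity $\mathrm{cor} \circ \mathrm{res} = N_G \cdot \mathrm{id}_{\mathrm{Div}(Y)}$ recorded just before Theorem \ref{commutativity}. This identity descends to Jacobians, so for $[D] \in \mathrm{Jac}(Y)$ we have $N_G \cdot [D] = \mathrm{cor}(\mathrm{res}([D]))$. Since $\mathrm{cor}: \mathrm{Jac}(X) \longrightarrow \mathrm{Jac}(Y)$ was shown (via the diagram (\ref{on_way_norm}) and the snake lemma argument) to be injective, it follows that $N_G \cdot [D] = 0$ if and only if $\mathrm{res}([D]) = 0$. Hence
$$\mathrm{Jac}^0(Y) = \ker\bigl(\mathrm{res}: \mathrm{Jac}(Y) \longrightarrow \mathrm{Jac}(X)\bigr).$$

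Next I would invoke the surjectivity of $\mathrm{res}: \mathrm{Jac}(Y) \longrightarrow \mathrm{Jac}(X)$ which was deduced immediately after Theorem \ref{commutativity} (again by a snake lemma application combined with Theorem \ref{commutativity}). This yields a short exact sequence
\begin{equation*}
0 \longrightarrow \mathrm{Jac}^0(Y) \longrightarrow \mathrm{Jac}(Y) \stackrel{\mathrm{res}}{\longrightarrow} \mathrm{Jac}(X) \longrightarrow 0,
\end{equation*}
and, using $|\mathrm{Jac}(Y)| = \kappa_Y$ and $|\mathrm{Jac}(X)| = \kappa_X$ from the theorem attributed to Corry--Perkinson, we conclude $|\mathrm{Jac}^0(Y)| = \kappa_Y/\kappa_X$. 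Substituting this equality into Theorem \ref{index} immediately gives the second claim.

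There is no real obstacle, since all the substantive work (injectivity of $\mathrm{cor}$, surjectivity of $\mathrm{res}$, compatibility with $\phi$, and the index computation of Theorem \ref{index}) has already been carried out; the proof is essentially a bookkeeping step that repackages those ingredients. If any care is needed, it is in being explicit that the identity $\mathrm{cor} \circ \mathrm{res} = N_G \cdot \mathrm{id}$, which is stated on $\mathrm{Div}(Y)$, indeed passes to $\mathrm{Jac}(Y)$; but this is automatic because $\mathrm{res}$ and $\mathrm{cor}$ respect both $\mathrm{Div}^\circ$ and $\mathrm{Pr}$ (the former via the degree diagram, the latter via Theorem \ref{commutativity}).
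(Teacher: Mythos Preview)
Your proposal is correct and follows essentially the same approach as the paper: the paper's proof is the single chain $|{\rm Jac}^{0}(Y)| = |\ker(N_{G})| = |\ker({\rm cor}\circ{\rm res})| = |\ker({\rm res})| = \kappa_{Y}/\kappa_{X}$, which is exactly your argument compressed into one line. You have simply made explicit the two ingredients (injectivity of ${\rm cor}$ and surjectivity of ${\rm res}$ on Jacobians) that the paper's chain of equalities uses silently.
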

\begin{proof}
We have $|{\rm Jac}^{0}(Y)| = |{\rm ker}(N_{G})| = |{\rm ker}({\rm cor} \circ {\rm res})| = |{\rm ker}(res)| = \frac{\kappa_{Y}}{\kappa_{X}}$, where all these maps are viewed as being defined and taking values into Jacobians.
\end{proof}

\begin{remark}
Note that it follows from the proof of the last theorem that the restriction map induces an isomorphism ${\rm Jac}(Y)/{\rm Jac}^{0}(Y) \stackrel{\simeq}{\longrightarrow} {\rm Jac}(X)$.
\end{remark}

\bibliographystyle{plain}
\bibliography{main}

\end{document}